\newtheorem{thm}{Theorem}[section]
\newtheorem{cor}[thm]{Corollary}
\newtheorem{lem}[thm]{Lemma}
\newtheorem{prop}[thm]{Proposition}
\theoremstyle{definition}
\newtheorem{defn}[thm]{Definition}
\theoremstyle{remark}
\newtheorem{rem}[thm]{Remark}
\newtheorem{ex}[thm]{\bf Example}
\newcommand{\bt}{\begin{thm}}
\newcommand{\et}{\end{thm}}
\newcommand{\bc}{\begin{cor}}
\newcommand{\ec}{\end{cor}}
\newcommand{\bl}{\begin{lem}}
\newcommand{\el}{\end{lem}}
\newcommand{\bp}{\begin{prop}}
\newcommand{\ep}{\end{prop}}
\newcommand{\bd}{\begin{defn}}
\newcommand{\ed}{\end{defn}}
\newcommand{\br}{\begin{rem}}
\newcommand{\er}{\end{rem}}
\newcommand{\bpr}{\begin{proof}}
\newcommand{\epr}{\end{proof}}
\newcommand{\bex}{\begin{ex}}
\newcommand{\eex}{\end{ex}}
\newcommand{\bcd}{\begin{CD}}
\newcommand{\ecd}{\end{CD}}
\newcommand{\bi}{\begin{itemize}}
\newcommand{\ei}{\end{itemize}}
\newcommand{\be}{\begin{enumerate}}
\newcommand{\ee}{\end{enumerate}}
\newcommand{\ba}{\begin{array}}
\newcommand{\ea}{\end{array}}
\newcommand{\beq}{\begin{equation}}
\newcommand{\eeq}{\end{equation}}
\newcommand{\beqa}{\begin{eqnarray}}
\newcommand{\eeqa}{\end{eqnarray}}
\newcommand{\bca}{\begin{cases}}
\newcommand{\eca}{\end{cases}}
\newcommand{\bal}{\begin{aligned}}
\newcommand{\eal}{\end{aligned}}
\newcommand{\Z}{{\mathbb Z}}
\newcommand{\R}{{\mathbb R}}
\newcommand{\C}{{\mathbb C}}
\newcommand{\T}{{\mathbb T}}
\newcommand{\DVZ}{{\operatorname{DVZ}}}
\newcommand{\DG}{{\operatorname{DG}}}
\newcommand{\Sz}{{\operatorname{Sz}}}
\newcommand{\spn}{{\operatorname{span}}}
\newcommand{\re}{{\operatorname{Re}}}
\newcommand{\im}{{\operatorname{Im}}}
\newcommand{\sgn}{{\operatorname{sgn}}}
\newcommand{\<}{\langle}
\renewcommand{\>}{\rangle}
\newcommand{\uk}{|\kern-3.3pt\uparrow\>}
\newcommand{\dk}{|\kern-3.3pt\downarrow\>}
\newcommand{\ub}{\<\uparrow\kern-3.3pt|}
\newcommand{\db}{\<\downarrow\kern-3.3pt|}
\begin{document}


\title{\bf A CMV connection between \break orthogonal polynomials on the unit circle and the real line}
\author{M. J. Cantero$^1$, F. Marcell\'an$^2$, L. Moral$^1$, L. Vel\'azquez$^1$
\footnote{The work of the first, third and fourth authors has been supported in part by the research project MTM2017-89941-P from Ministerio de Econom\'{\i}a, Industria y Competitividad of Spain and the European Regional Development Fund (ERDF), by project UAL18-FQM-B025-A (UAL/CECEU/FEDER) and by project E26\_17R of Diputaci\'on General de Arag\'on (Spain) and the ERDF 2014-2020 ``Construyendo Europa desde Arag\'on". The work of the second author has been partially supported by the research project PGC2018--096504-B-C33 from Agencia Estatal de investigaci\'on  of Spain.}}
\date{\small $^1$ Departamento de Matem\'atica Aplicada and IUMA, Universidad de Zaragoza, Spain
\break
$^2$ Departamento de Matem\'aticas, Universidad Carlos III de Madrid, Spain.}

\maketitle

\begin{abstract}

M.~Derevyagin, L.~Vinet and A.~Zhedanov introduced in \cite{DVZ} a new connection between orthogonal polynomials on the unit circle and the real line. It maps any real CMV matrix into a Jacobi one depending on a real parameter $\lambda$. In \cite{DVZ} the authors prove that this map yields a natural link between the Jacobi polynomials on the unit circle and the little and big $-1$ Jacobi polynomials on the real line. They also provide explicit expressions for the measure and orthogonal polynomials associated with the Jacobi matrix in terms of those related to the CMV matrix, but only for the value $\lambda=1$ which simplifies the connection --{\it basic DVZ connection}--. However, similar explicit expressions for an arbitrary value of $\lambda$ --{\it (general) DVZ connection}-- are missing in \cite{DVZ}. This is the main problem overcome in this paper.

This work introduces a new approach to the DVZ connection which formulates it as a two-dimensional eigenproblem by using known properties of CMV matrices. This allows us to go further than \cite{DVZ}, providing explicit relations between the measures and orthogonal polynomials for the general DVZ connection. It turns out that this connection maps a measure on the unit circle into a rational perturbation of an even measure supported on two symmetric intervals of the real line, which reduce to a single interval for the basic DVZ connection, while the perturbation becomes a degree one polynomial. Some instances of the DVZ connection are shown to give new one-parameter families of orthogonal polynomials on the real line.

\end{abstract}

\noindent{\it Keywords and phrases}: Orthogonal polynomials, Szeg\H{o} connection, Jacobi matrices, CMV matrices, Verblunsky coefficients.

\medskip

\noindent{\it (2010) AMS Mathematics Subject Classification}: 42C05.

\section {Introduction}
\label{sec:Int}

Any sequence $p_n$ of orthonormal polynomials with respect to a measure on the real line (OPRL) is characterized by a Jacobi matrix,
\begin{equation} \label{eq:Jac}
 {\cal J} =
 \begin{pmatrix}
	b_0 & a_0
	\\
	a_0 & b_1 & \kern-2pt a_1
	\\
    & a_1 & \kern-2pt b_2 & \kern-3pt a_2
    \\
    & & \kern-2pt \ddots & \kern-3pt \ddots & \kern-3pt \ddots
 \end{pmatrix},
 \qquad b_n\in\R, \qquad a_n>0,
\end{equation}
encoding the corresponding three term recurrence relation,
$$
 {\cal J}p(x) = xp(x), \qquad\quad p=(p_0,p_1,p_2,\dots)^t.
$$
Without loss of generality, we can suppose the measure normalized so that $p_0=1$.

When the measure is supported on the unit circle $\T=\{z\in\C : |z|=1\}$, another kind of recurrence relation characterizes the corresponding sequences of orthogonal polynomials (OPUC). In the case of monic OPUC $\phi_n$, it has the form \cite{Ger,GrSz,SiOPUC,Sz}
\begin{equation} \label{eq:mRR}
 \phi_{n+1}(z) = z\phi_n(z) - \overline{\alpha_n}\phi_n^*(z),
 \qquad \alpha_n \in \C, \qquad |\alpha_n|<1,
\end{equation}
where $\phi_n^*(z)=z^n\overline{\phi_n(1/\overline{z})}$ is known as the reversed polynomial of $\phi_n$ and the parameters $\alpha_n$ are called Verblunsky coefficients. Introducing the complementary parameters $\rho_n = \sqrt{1-|\alpha_n|^2}$, the orthonormal OPUC $\varphi_n$ with positive leading coefficients become
$$
 \varphi_n(z) = \kappa_n\phi_n(z) =
 \kappa_n(-\overline{\alpha_{n-1}}+\cdots+z^n),
 \qquad
 \kappa_n=(\rho_0\rho_1\cdots\rho_{n-1})^{-1}.
$$
A more natural set of funtions is constituted by the orthonormal Laurent polynomials on the unit circle (OLPUC), given by
\begin{equation} \label{eq:OLPUC-OPUC}
 \begin{aligned}
 	& \chi_{2k}(z) = z^{-k} \varphi_{2k}^*(z)
 	= z^k \overline{\varphi_{2k}(1/\overline{z})}
 	= \kappa_{2k} (-\alpha_{2k-1}z^k+\cdots+z^{-k}),
 	\\
 	& \chi_{2k+1}(z) = z^{-k} \varphi_{2k+1}(z)
 	= \kappa_{2k+1} (z^{k+1}+\cdots+(-\overline{\alpha_{2k}})z^{-k}).
 \end{aligned}
\end{equation}
They provide a simple matrix representation of the recurrence relation (see \cite{CMV03,SiOPUC,Watkins}), which reads as
$$
 {\cal C}\chi(z) = z\chi(z), \qquad\quad \chi=(\chi_0,\chi_1,\chi_2,\dots)^t,
$$
in terms of the so called CMV matrix ${\cal C}$ --the unitary analogue of a Jacobi matrix \cite{KN07,SiOPUC}--, a five-diagonal unitary matrix which factorizes as ${\cal C} = {\cal M}{\cal L}$, with
\begin{equation} \label{eq:LMfac}
 {\cal L} =
 \begin{pmatrix}
 	\Theta_0
 	\\[-2pt]
 	& \kern-3pt \Theta_2
 	\\[-2pt] 	
	& & \kern-3pt \Theta_4
 	\\[-2pt]
 	& & & \kern-3pt \ddots
 \end{pmatrix},
 \kern15pt
 {\cal M} =
 \begin{pmatrix}
 	1
 	\\[-2pt]
 	& \kern-3pt \Theta_1
 	\\[-2pt]
 	& & \kern-3pt \Theta_3
 	\\[-2pt]
 	& & & \kern-3pt \ddots
 \end{pmatrix},
 \kern15pt
 \Theta_n =
 \begin{pmatrix}
 	\overline{\alpha_n} & \rho_n
 	\\
 	\rho_n & -\alpha_n
 \end{pmatrix}.
\end{equation}
We refer to this as the $\Theta$-factorization of a CMV matrix. Analogously to the case of the real line, we will assume that $\chi_0=\varphi_0=1$ by normalizing the measure.

Since this paper revolves around a new connection between OPUC and OPRL, it is worth commenting on known ones. The paradigm of such connections, due to Szeg\H o \cite{Ger,SiOPUC,Sz}, starts with a measure $\mu$ on $\T$ which is symmetric under conjugation, which means that the Verblunsky coefficients are real or, in other words, the related OPUC $\varphi_n$ have real coefficients. The measure $\sigma$ induced on $[-2,2]$ by the mapping $z \mapsto x=z+z^{-1}$ is called its Szeg\H o projection, which we denote by $\sigma=\Sz(\mu)$. The related OPRL are given by \cite{SiOPUC,Sz}
\begin{equation} \label{eq:SzOPRL}
 p_n(x) = z^{-n}
 \frac{\varphi_{2n}(z)+\varphi_{2n}^*(z)}
 {\sqrt{2(1-\alpha_{2n-1})}},
 \qquad
 x=z+z^{-1}.
\end{equation}
Here and in what follows we use the convention $\alpha_{-1}=-1$.

A more recent OPUC-OPRL connection goes back to works of Delsarte and Genin in the framework of the split Levinson algorithm \cite{DG,DG1,DG2}. They realized that
\begin{equation} \label{eq:DGOPRL}
 \hat{p}_n(x) = (z^{1/2})^{-n}
 \frac{\varphi_n(z)+\varphi_n^*(z)}
 {\sqrt{2(1-\alpha_{n-1})}},
 \qquad
 x=z^{1/2}+z^{-1/2},
\end{equation}
constitute an OPRL sequence with respect to an even measure on $[-2,2]$. We will refer to this as the DG connection between OPUC and OPRL.

Although the relations between the measures and orthogonal polynomials are simple for the above two connections, the situation is somewhat different regarding the CMV and Jacobi matrices: for both connections, the relations expressing the Jacobi parameters in terms of the Verblunsky coefficients are non-trivial to invert \cite{DG,SiOPUC,Sz}. This changes completely with the new connection (see Proposition~\ref{prop:J-DVZ}), directly defined by a very simple relation between CMV and Jacobi matrices (see \cite{DVZ} and Section~\ref{sec:DVZ}). We refer to this by the surname initials of its discoverers, Derevyagin, Vinet and Zhedanov: the DVZ connection.

DVZ has other properties which distinguish it from Szeg\H o and DG. First, for each OPUC instance with real coefficients, instead of a single OPRL sequence, it gives a family of OPRL depending on an arbitrary real parameter $\lambda$. Besides, up to $\lambda=\pm1$, the DVZ measure on the real line is supported on two disjoint symmetric intervals, although it is not even, in contrast to the DG measure on $[-2,2]$. The price to pay for these differences is a slightly more involved relation between the OPUC and OPRL linked by DVZ, which includes a less trivial relation between the unit circle and real line variables than for Szeg\H o and DG.

The rest of the paper is devoted to the DVZ connection, which is detailed in the following way: Section~\ref{sec:DVZ} describes two especially simple cases of the DVZ connection, corresponding $\lambda=\pm1$. The aim is to introduce a new eigenproblem translation of DVZ in the easiest possible setting. Section~\ref{sec:SDs} reconciles this new approach with the original one in \cite{DVZ}, in which the basic DVZ connection for $\lambda=1$ arises as a composition of DG and a Christoffel transformation on the real line \cite{BuMa,Zhe}, i.e. the multiplication of a measure on the real line by a real polynomial. The main results of the paper are in Section~\ref{sec:gDVZ} where, extending the previous eigenmachinery to the general DVZ connection given by an arbitrary $\lambda\in\R$, we obtain explicit expressions for the corresponding measures and orthogonal polynomials on the real line in terms of the unit circle counterparts. This is shown to provide new examples of one-parameter OPRL families in Section~\ref{sec:Ex}.

\section {A new approach to the basic DVZ connection}
\label{sec:DVZ}

Derevyagin, Vinet and Zhedanov established in \cite{DVZ} a new connection between OPUC and OPRL, closely related to the DG connection. The starting point for this was the $\Theta$-factorization ${\cal C}={\cal M}{\cal L}$ of a CMV matrix, given by the tridiagonal factors \eqref{eq:LMfac}. If the Verblunsky coefficients are real, a Jacobi matrix ${\cal K}$ can be built up out of the symmetric unitary factors ${\cal L}$ and ${\cal M}$,
\begin{equation} \label{eq:newL+M}
 {\cal K} = {\cal L}+{\cal M} =
 \begin{pmatrix}
 	\alpha_0+1 & \rho_0
 	\\[2pt]
 	\rho_0 & \alpha_1-\alpha_0 & \rho_1
 	\\[2pt]
 	& \rho_1 & \alpha_2-\alpha_1 & \rho_2
 	\\[2pt]
	& & \rho_2 & \alpha_3-\alpha_2 & \kern15pt \rho_3
 	\\
 	& & & \kern-35pt \ddots & \kern-30pt \ddots & \ddots
 \end{pmatrix}.
\end{equation}
In \cite{DVZ}, Derevyagin, Vinet and Zhedanov identified the measure and OPRL related to ${\cal K}$ in terms of the measure and OPUC associated with ${\cal C}$. Such an identification appeared surprisingly when combining the DG connection with a Christoffel transformation on the real line. This defines what we will call the basic DVZ connection (a more general one will come later on), which maps OPUC with real coefficients into OPRL with respect to a measure supported on $[-2,2]$. We will refer to $\mathcal{K}$ as the basic DVZ transform of $\mathcal{C}$.

Since ${\cal L}^2={\cal M}^2=I$ for $\alpha_n\in\R$ and $|\alpha_n|<1$, the Jacobi matrix \eqref{eq:newL+M} satisfies
$$
 {\cal K}^2-2I={\cal C}+{\cal C}^\dag.
$$
where $\dag$ denotes the adjoint of a matrix. This identity encodes the relation between the basic DVZ connection and the Szeg\H o projection --whose Jacobi matrix is hidden in ${\cal C}+{\cal C}^\dag$ \cite{KN04,SiOPUC}--, and suggests also a link with symmetrization processes on $[-2,2]$ --adressed by the mapping $x \mapsto x^2-2$ (see Section~\ref{sec:SDs}), which is represented by the operation ${\cal K}^2-2I$ on the Jacobi matrix ${\cal K}$--. Actually, we will show that the basic DVZ connection follows by a concatenation of a symmetrization process \cite{MaSa} and a Christoffel transformation \cite{CMMV,DaHeMa,GaMa} on the unit circle, followed by the Szeg\H o projection. Equivalently, we can implement first the Szeg\H o mapping, and then perform the real line version of the symmetrization (see Section~\ref{sec:SDs}) and Christoffel \cite{BuMa,Zhe}.

In this section we will present the basic DVZ connection in a more natural and concise way, directly looking for the relation between the OLPUC related to ${\cal C}$ and the OPRL associated with the basic DVZ transform ${\cal K}$, and then using this to uncover the relation between the corresponding orthogonal polynomials and orthogonality measures. The interpretation in terms of symmetrizations, Szeg\H o  and Christoffel will follow as a byproduct of these results.

The OPRL $q_n$ corresponding to the Jacobi matrix ${\cal K}$ are the solutions of the formal eigenvalue equation
$$
 {\cal K} q(x) = x q(x),
 \qquad\quad
 q=(q_0,q_1,q_2,\dots)^t, \qquad\quad q_0=1.
$$
We will identify these OPRL by using some relations, often not fully exploited, between the OLPUC $\chi_n$ related to $\cal C$ and the factors $\cal L$, $\cal M$ \cite{CMV03},
$$
 {\cal L}\chi(z) = z\chi_*(z),
 \qquad\quad
 {\cal M}\chi_*(z)=\chi(z),
 \qquad\quad
 \begin{cases}
 	\chi=(\chi_0,\chi_1,\chi_2,\dots)^t,
 	\\
	\chi_*=(\chi_{0*},\chi_{1*},\chi_{2*},\dots)^t.
 \end{cases}
$$
Bearing in mind that ${\cal L}$ and ${\cal M}$ are involutions for $\alpha_n\in(-1,1)$, the above relations can be equivalently written as
$$
 {\cal L}\chi_*(z) = z^{-1}\chi(z),
 \qquad\quad
 {\cal M}\chi(z)=\chi_*(z).
$$
Therefore,
$$
 {\cal K}\chi(z)=(z+1)\chi_*(z),
 \qquad\quad
 {\cal K}\chi_*(z)=(z^{-1}+1)\chi(z),
$$
which show that, for each fixed $z\in\C^*=\C\setminus\{0\}$, the linear subspace $\spn\{\chi(z),\chi_*(z)\}$ is invariant for ${\cal K}$. This implies that ${\cal K}$ has formal eigenvectors $u(z)\chi(z)+v(z)\chi_*(z)$ given by the eigenvalue equation
$$
 \begin{pmatrix}
 0 & z^{-1}+1
 \\
 z+1 & 0
 \end{pmatrix}
 \begin{pmatrix} u(z) \\ v(z) \end{pmatrix} = x
 \begin{pmatrix} u(z) \\ v(z) \end{pmatrix}.
$$
The eigenvalues $x$ are the solutions of
$$
 x^2 = (z+1)(z^{-1}+1),
$$
thus we can write
$$
 x = z^{1/2}+z^{-1/2}
$$
for a choice of the square root $z^{1/2}$ (here and in what follows we understand that $z^{n/2}=(z^{1/2})^n$, $n\in\Z$, for that choice of the square root). The corresponding eigenvectors of ${\cal K}$ are spanned by
\begin{equation} \label{eq:X}
 X(z) = \chi(z) + \frac{z+1}{x} \chi_*(z) =
 \chi(z) + z^{1/2} \chi_*(z).
\end{equation}
We conclude that the OPRL $q_n$ are given by
\begin{equation} \label{eq:q}
\begin{gathered}
 q_n(x) = \frac{X_n(z)}{X_0(z)}
 = \frac{\chi_n(z)+z^{1/2}\chi_{n*}(z)}{1+z^{1/2}}
 = z^{-n/2} \, \frac{\varphi_n^*(z)+z^{1/2}\varphi_n(z)}{1+z^{1/2}},
 \\[2pt]
 x=z^{1/2}+z^{-1/2},
\end{gathered}
\end{equation}
where we have used \eqref{eq:OLPUC-OPUC}. This coincides with the expression given in \cite{DVZ}.

The orthogonality measure $\nu$ on the real line for $q_n$ follows from that one $\mu$ on the unit circle for $\chi_n$. To obtain this relation, let us parametrize the unit circle as $\T=\{e^{i\theta}:\theta\in[0,2\pi)\}$ and consider $\mu$ as a measure on $[0,2\pi]$, where the mass at $1\in\T$, if any, is distributed equally between $\theta=0$ and $\theta=2\pi$ so that $\mu$ remains symmetric under the transformation $\theta \mapsto 2\pi-\theta$, which represents the conjugation of $e^{i\theta}$. Then, the restriction of $x=z^{1/2}+z^{-1/2}$ to the unit circle leads to the map
\begin{equation} \label{eq:x}
 x(\theta)=2\cos\frac{\theta}{2},
 \qquad \theta\in[0,2\pi],
\end{equation}
which is a one-to-one transformation between $[0,2\pi]$ and $[-2,2]$ with inverse
\begin{equation} \label{eq:th}
 \theta(x) = 2 \arccos\frac{x}{2},
 \qquad x\in[-2,2].
\end{equation}
The symmetric extension of $\mu$ to $[0,2\pi]$ is crucial to identify the orthogonality measure of $q_n$, and guarantees that the relation $x(2\pi-\theta)=-x(\theta)$ makes sense for any angle $\theta$ in the interval $[0,2\pi]$ where $\mu$ is supported.

The orthogonality measure of $q_n(x)=X_n(z)/X_0(z)$ arises from the observation that the components $X_n$ of the formal eigenvector \eqref{eq:X} are orthogonal with respect to $\mu$. Indeed,
$$
\begin{aligned}
 \int_0^{2\pi} X(e^{i\theta}) X(e^{i\theta})^\dag \, d\mu(\theta)
 & = \int_0^{2\pi}
 (\chi(e^{i\theta}) \chi(e^{i\theta})^\dag +
 \chi_*(e^{i\theta}) \chi_*(e^{i\theta})^\dag)
 \, d\mu(\theta)
 \\
 & + \int_0^{2\pi}
 (e^{i\theta/2} \chi_*(e^{i\theta}) \chi(e^{i\theta})^\dag +
 e^{-i\theta/2} \chi(e^{i\theta}) \chi_*(e^{i\theta})^\dag)
 \, d\mu(\theta).
\end{aligned}
$$
Due to the orthonormality of $\chi_n$ and $\chi_{n*}$ with respect to $\mu$, the first term in the right hand side is 2I. On the other hand, taking into account that the OLPUC have real coefficients due to the symmetry of the measure under conjugation, the second term vanishes because its integrand simply changes sign under the transformation $\theta \mapsto 2\pi-\theta$ which leaves the measure $\mu$ invariant. Therefore,
$$
 \int_{-2}^2 q(x) q(x)^\dag \left|X_0(e^{i\theta(x)})\right|^2 d\mu(\theta(x)) =
 \int_0^{2\pi} X(e^{i\theta}) X(e^{i\theta})^\dag \, d\mu(\theta) = 2I,
$$
which proves that the orthogonality measure of $q_n$ is
\begin{equation} \label{eq:nu}
 d\nu(x) = \frac{1}{2} \left|1+e^{i\theta(x)/2}\right|^2 d\mu(\theta(x)) =
 \frac{1}{2} (2+x) \, d\mu(\theta(x)),
 \qquad x\in[-2,2],
\end{equation}
a result also present in \cite{DVZ}.

Since $\theta(-x)=2\pi-\theta(x)$, the symmetry of $\mu$ under $\theta \mapsto 2\pi-\theta$ makes $d\mu(\theta(x))$ symmetric under $x \mapsto -x$. However, the orthogonality measure $\nu$ does not preserve finally this symmetry due to the additional factor $\left|X_0(e^{i\theta(x)/2})\right|^2=2+x$ that comes from the relation $d\nu(x)=\frac{1}{2}\left|X_0(e^{i\theta(x)/2})\right|^2\,d\mu(\theta(x))$.

\medskip

The above discussion may be extended to the tridiagonal matrix
\begin{equation} \label{eq:L-M}
 {\cal K}_{-} = {\cal L}-{\cal M} =
 \begin{pmatrix}
 	\alpha_0-1 & \rho_0
 	\\[2pt]
 	\rho_0 & -\alpha_1-\alpha_0 & -\rho_1
 	\\[2pt]
 	& -\rho_1 & \alpha_2+\alpha_1 & \rho_2
 	\\[2pt]
	& & \rho_2 & -\alpha_2-\alpha_3 & \kern15pt -\rho_3
 	\\
 	& & & \kern-35pt \ddots & \kern-30pt \ddots & \ddots
 \end{pmatrix},
\end{equation}
which is the conjugated of a Jacobi matrix by a diagonal sign matrix,
\begin{equation} \label{eq:Pi}
 \kern-9pt
 \Pi {\cal K}_{-} \Pi \kern-1pt = \kern-2pt
 \left(
 \begin{smallmatrix}
    \\
 	\alpha_0-1 & \rho_0
 	\\[5pt]
 	\rho_0 & -\alpha_1-\alpha_0 & \kern3pt \rho_1
 	\\[5pt]
 	& \rho_1 & \kern3pt \alpha_2+\alpha_1 & \rho_2
 	\\[5pt]
 	& & \kern3pt \rho_2 & -\alpha_3-\alpha_2 & \kern12pt \rho_3
 	\\[-2pt]
 	& & & \kern-20pt \ddots & \kern-20pt \ddots & \ddots
	\\[1pt]
 \end{smallmatrix}
 \right)\kern-1pt,
 \quad
 \Pi \kern-1pt = \kern-2pt
 \left(
 \begin{smallmatrix}
 	\\[2pt]
 	1 \\[1pt]
 	& \kern4pt 1 \\[1pt]
 	& & \kern-2pt -1 \\[1pt]
 	& & & \kern-2pt -1 \\[1pt]
 	& & & & \kern4pt 1 \\[1pt]
 	& & & & & \kern4pt 1 \\[1pt]
 	& & & & & & \kern-2pt -1 \\[1pt]
 	& & & & & & & \kern-2pt -1 \\[-5pt]
 	& & & & & & & & \ddots
 \\[2pt]
 \end{smallmatrix}
 \right)\kern-1pt.
\end{equation}
Therefore, the formal eigenvalue equation
$$
 {\cal K}_- q^-(x) = x q^-(x),
 \qquad\quad
 q^-=(q^-_0,q^-_1,q^-_2,\dots)^t, \qquad\quad q^-_0=1,
$$
defines a sequence $q^-_n$ of OPRL whose leading coefficients have a sign given by the corresponding diagonal coefficient of $\Pi$.

A direct translation of the previous reasoning to this case shows that ${\cal K}_-$ has formal eigenvectors
$$
 \chi(z) + iz^{1/2} \chi_*(z)
$$
with eigenvalue
$$
 x = -i(z^{1/2}-z^{-1/2}),
$$
so that
$$
\begin{gathered}
 q^-_n(x)
 = \frac{\chi_n(z)+iz^{1/2}\chi_{n*}(z)}{1+iz^{1/2}}
 = z^{-n/2} \, \frac{\varphi_n^*(z)+iz^{1/2}\varphi_n(z)}{1+iz^{1/2}},
 \\[2pt]
 x=-i(z^{1/2}-z^{-1/2}).
\end{gathered}
$$

As for the orthogonality measure $\nu_-$ of $q^-_n$, in this case a convenient parametrization of the unit circle is $\T=\{e^{i\theta}:\theta\in(-\pi,\pi]\}$. Accordingly, we consider the measure $\mu$ of $\chi_n$ as a measure on $[-\pi,\pi]$, with any possible mass at $-1\in\T$ distributed equally between $\theta=-\pi$ and $\theta=\pi$ to preserve the symmetry of $\mu$ under the transformation $\theta \mapsto -\theta$ representing the conjugation of $e^{i\theta}$. Then, on the unit circle, $x=-i(z^{1/2}+z^{-1/2})$ becomes
\begin{equation} \label{eq:x-}
 x(\theta)=2\sin\frac{\theta}{2},
 \qquad \theta\in[-\pi,\pi],
\end{equation}
which maps $[-\pi,\pi]$ one-to-one onto $[-2,2]$, and has the inverse
\begin{equation} \label{eq:th-}
 \theta(x) = 2 \arcsin\frac{x}{2},
 \qquad x\in[-2,2].
\end{equation}
The same kind of argument as in the previous case proves that
$$
 d\nu_-(x) = \frac{1}{2} \left|1+ie^{i\theta(x)/2}\right|^2 d\mu(\theta(x))
 = \frac{1}{2} (2-x) \, d\mu(\theta(x)),
 \qquad x\in[-2,2].
$$
Bearing in mind that $\theta(-x)=-\theta(-x)$, the measure $\nu_-$ fails to be symmetric with respect to $x \mapsto -x$ only due to the factor $2-x$.



The next section will present a closer look at the relations between the basic DVZ connection and the well known Szeg\H o projection between the unit circle and the real line. This will make symmetrization processes and Christoffel transformations to enter into the game.

\section{DVZ, Christoffel, Szeg\H o and symmetrizations}
\label{sec:SDs}

Originally, in \cite{DVZ}, the basic DVZ connection arises as a combination of well known transformations which surprisingly links the OPUC related to a CMV matrix $\mathcal{C}=\mathcal{M}\mathcal{L}$ and the OPRL associated to the Jacobi matrix $\mathcal{K}=\mathcal{L}+\mathcal{M}$. We will see that such an interpretation follows directly from our more direct approach. For this purpose, let us rewrite the DVZ relation \eqref{eq:q} between OPUC and OPRL as
$$
 q_n(x) =
 z^{-n} \, \frac{\varphi_n^*(z^2)+z\varphi_n(z^2)}{1+z},
 \qquad
 x=z+z^{-1}.
$$
This relation suggests introducing the symmetrized OPUC \cite{MaSa},
\begin{equation} \label{eq:symOPUC}
 \hat\varphi_n(z) =
 \begin{cases}
 	\varphi_k(z^2), & n=2k,
	\\
	z\varphi_k(z^2), & n=2k+1,
 \end{cases}
\end{equation}
characterized by the Verblunsky coefficients $\hat\alpha_{2k}=0$ and $\hat\alpha_{2k+1}=\alpha_k$. They are orthonormal with respect to a measure which is is invariant for the mapping $\theta \mapsto \theta+\pi$ --i.e., symmetric under the change of sign of $e^{i\theta}$--, namely,
$$
 d\hat\mu(\theta) = \frac{1}{2}\left(d\mu(2\theta)+d\mu(2\theta-2\pi)\right),
$$
where $\mu$ is the measure on $[0,2\pi]$ which makes $\varphi_n$ orthonormal, so that $d\mu(2\theta)$ and $d\mu(2\theta-2\pi)$ are supported on $[0,\pi]$ and $[\pi,2\pi]$ respectively.

Since $d\mu(\theta)=2d\hat\mu(\theta/2)$, we can express the OPRL \eqref{eq:q} and the measure \eqref{eq:nu} for the basic DVZ transform as
\begin{equation} \label{eq:q-nu}
\begin{aligned}
 & q_n(x) =
 z^{-n} \, \frac{\hat\varphi_{2n+1}^*(z)+\hat\varphi_{2n+1}(z)}{1+z},
 \qquad
 x=z+z^{-1},
 \\[2pt]
 & d\nu(x) = (2+x) \, d\hat\mu(\arccos(x/2)),
 \qquad
 x\in[-2,2].
\end{aligned}
\end{equation}
Given a measure $\mu$ on $\T$ which is symmetric under conjugation, its Szeg\H o projection is the measure $\sigma=\Sz(\mu)$ induced on $[-2,2]$ by the mapping $z \mapsto x=z+z^{-1}$, which is explicitly given by $d\sigma(x)=2d\mu(\arccos(x/2))$. The above expression for the measure $\nu$ shows that the basic DVZ connection is a combination of three transformations: symmetrization $\mu \mapsto \hat\mu$, Szeg\H o projection $\hat\mu \mapsto \hat\sigma=\Sz(\hat\mu)$ and the Christoffel transformation $d\hat\sigma(x) \mapsto \frac{1}{2}(2+x)\,d\hat\sigma(x)$.

Let us have a closer look at the effect of these transformations on the orthogonal polynomials. The OPRL associated with the Szeg\H o projection $\sigma=\Sz(\mu)$ have the form \cite{SiOPUC,Sz}
\begin{equation} \label{eq:SzOPRL}
 p_n(x) = z^{-n}
 \frac{\varphi_{2n}(z)+\varphi_{2n}^*(z)}
 {\sqrt{2(1-\alpha_{2n-1})}},
 \qquad
 x=z+z^{-1}.
\end{equation}
On the other hand, due to the symmetry $e^{i\theta} \mapsto -e^{i\theta}$ of $\hat\mu$, its Szeg\H o projection $\hat\sigma=\Sz(\hat\mu)$ becomes an even measure, i.e. symmetric under $x \mapsto -x$, thus its OPRL have the form \cite[Chapter I]{Ch}
$$
 \hat{p}_{2n}(x) = P_n(x^2), \qquad \hat{p}_{2n+1}(x) = x\widetilde{P}_n(x^2),
$$
with $P_n$ and $\widetilde{P}_n$ polynomials with orthonormality measures $dm(x)$ and $x\,dm(x)$ supported on $\R_+$. Bearing in mind \eqref{eq:symOPUC}, an expression similar to \eqref{eq:SzOPRL} for the OPRL $\hat{p}_n$ related to the the Szeg\H o projection $\hat\sigma=\Sz(\hat\mu)$ shows that
$$
 \hat{p}_n(x) = z^{-n}
 \frac{\varphi_n(z^2)+\varphi_n^*(z^2)}
 {\sqrt{2(1-\alpha_{n-1})}},
 \qquad
 x=z+z^{-1},
$$
which coincides with \eqref{eq:DGOPRL} once we substitute $z$ by $z^{1/2}$. This shows that the DG connection follows just by combining the symmetrization and the Szeg\H o projection, thus the DG measure on $[-2,2]$ is given by $d\hat\sigma(x)=2d\hat\mu(\arccos(x/2))=d\mu(2\arccos(x/2))$. Besides,
\begin{equation} \label{eq:peven}
 \hat{p}_{2n}(x) = \hat{p}_{2n}(z+z^{-1}) = p_n(z^2+z^{-2}) = p_n(x^2-2).
\end{equation}
Therefore, $P_n(x)=p_n(x-2)$, $dm(x)=d\sigma(x-2)$ and $\widetilde{P}_n(x)=\tilde{p}_n(x-2)$, where $\tilde{p}_n$ are orthonormal with respect to $d\tilde\sigma(x)=(x+2)\,d\sigma(x)$. Hence,
\begin{equation} \label{eq:podd}
 \hat{p}_{2n+1}(x) = x \, \tilde{p}_n(x^2-2),
\end{equation}
so that
$$
 \tilde{p}_n(z^2+z^{-2}) = \frac{1}{x} \, \hat{p}_{2n+1}(x) =
 z^{-(2n+1)}
 \frac{\hat\varphi_{4n+2}(z)+\hat\varphi_{4n+2}^*(z)}
 {\sqrt{2(1-\hat\alpha_{4n+1})}(z+z^{-1})}.
$$
Combining this with \eqref{eq:symOPUC} we find that
\begin{equation} \label{eq:SzOPRL2}
 \tilde{p}_n(x) =
 z^{-n}
 \frac{\varphi_{2n+1}(z)+\varphi_{2n+1}^*(z)}{\sqrt{2(1-\alpha_{2n})}(1+z)},
 \qquad
 x=z+z^{-1}.
\end{equation}
Bearing in mind that $\hat\alpha_{2n}=0$, this identifies $q_n$, as it is given in \eqref{eq:q-nu}, with the OPRL related to the measure $\frac{1}{2}(x+2)\,d\hat\sigma(x)$, in agreement with the previous interpretation of DVZ as a composition of the symmetrization, Szeg\H o and Christoffel transformations.

Using the following notation,
$$
\begin{aligned}
 & {\cal C} \xrightarrow{\; z \mapsto z^2} \widehat{\cal C}
 & & \text{Symmetrization process between CMV matrices,}
 \\
 & {\cal C} \xrightarrow{\ \Sz \;} {\cal J}
 & & \text{Szeg\H o projection between CMV and Jacobi matrices,}
 \\
 & {\cal J} \xrightarrow{\ \wp \;} {\cal K}
 & & \parbox{260pt}{\hfil\break
 Christoffel transformation between Jacobi matrices
 which multiplies the measure by the polynomial $\wp$,}
 \\
 & {\cal C} \xrightarrow{\ \DG \ } {\cal J}
 & & \text{DG connection between CMV and Jacobi matrices,}
 \\
 & {\cal C} \xrightarrow{\ \DVZ \ } {\cal K}
 & & \text{Basic DVZ connection between CMV and Jacobi matrices,}
\end{aligned}
$$
the above results are summarized in the commutative diagram below.
$$
 \xymatrix{
	\widetilde{\cal C} \ar[d]|{\Sz} &&
 	{\cal C} \ar[ll]_{2+z+z^{-1}} \ar[rr]^{z \mapsto z^2} \ar[d]|{\Sz}
	\ar@(r,r)[rrrrd]^{\DVZ}
	\ar@(d,u)[rrd]^{\DG} &&
	\widehat{\cal C} \ar[rr]^{\frac{1}{2}(2+z+z^{-1})} \ar[d]|{\Sz} &&
	\widehat{\cal D} \ar[d]|{\Sz}
	\\
	\widetilde{\cal J} &&
	{\cal J} \ar[ll]_{2+x} \ar@{=>}[rr]^{x \mapsto x^2-2} &&
	\widehat{\cal J} \ar[rr]^{\frac{1}{2}(2+x)} &&
	{\cal K} &
	\kern-27pt = {\cal L}+{\cal M}.
 }
$$

The above diagram has more than the three alluded transformations whose combination gives the basic DVZ connection. Let us comment on them. First, every Christoffel transformation $d\sigma(x) \mapsto \wp(x)\,d\sigma(x)$ between measures on $[-2,2]$ is lifted to a similar one $d\mu(\theta) \mapsto \wp(e^{i\theta}+e^{-i\theta})\,d\mu(\theta)$ between the measures on $\T$ from which they come as Szeg\H o projections. This explains the CMV matrix $\widehat{\cal D}$ in the right upper corner: its measure $\frac{1}{2}|z+1|^2d\hat\mu(z)$ is the Christoffel transform on the unit circle (see \cite{CMMV,DaHeMa,GaMa} for this notion) whose Szeg\H o projection is the measure $\nu$ of the basic DVZ transform ${\cal K}$.

Also, the Szeg\H o projection of the symmetrization process on $\T$ yields a transformation which maps any measure $\sigma$ on $[-2,2]$ into an even measure $\hat\sigma$ on the same interval. Since $z \mapsto z^2$ reads as $x \mapsto x^2-2$ if $x=z+z^{-1}$, we conclude that
$$
 d\hat\sigma(x) = \frac{1}{2} (d\sigma(\xi(x))+d\sigma(\xi(-x))),
 \qquad
 \xi(x)=x^2-2,
$$
where $\xi$ stands for the one-to-one mapping between $[0,2]$ and $[-2,2]$ induced by $x \mapsto x^2-2$, so that $d\sigma(\xi(x))$ is a measure on $[0,2]$, while $d\sigma(\xi(-x))$ is a measure on $[-2,0]$. Nevertheless, \eqref{eq:peven} and \eqref{eq:podd} show that the OPRL $\hat{p}_n$ related to $\hat\sigma$ are not built only out of the OPRL $p_n$ for $\sigma$, but the OPRL $\tilde{p}_n$ of its Christoffel transform $d\tilde\sigma(x)=(2+x)\,d\sigma(x)$ are also required. Actually, the relations \eqref{eq:peven} and \eqref{eq:podd} between $p_n$, $\tilde{p}_n$ and $\hat{p}_n$ imply that the corresponding Jacobi matrices, ${\cal J}$, $\widetilde{\cal J}$ and $\widehat{\cal J}$, are linked by
$$
 \widehat{\cal J}^2 - 2I = {\cal J} \oplus \widetilde{\cal J},
$$
where ${\cal J}$ and $\widetilde{\cal J}$ act on even and odd indices respectively. We express this diagrammatically as
$$
 \widetilde{\cal J} \xleftarrow{\ 2+x \ } {\cal J}
 \xRightarrow{\ x \mapsto x^2-2 \ } \widehat{\cal J}
$$
the double line in the right arrow indicating that both, ${\cal J}$ and $\widetilde{\cal J}$, are involved in the construction of $\widehat{\cal J}$.

The rest of the diagram, i.e. the connection between the CMV matrices ${\cal C}$ and $\widetilde{\cal C}$ in the left upper corner, is just the result of lifting to $\T$ the Christoffel transformation relating ${\cal J}$ and $\widetilde{\cal J}$.

\section{The general DVZ connection}
\label{sec:gDVZ}

In this section we intend to extend the basic DVZ connection to arbitrary real linear combinations of the factors ${\cal L}$, ${\cal M}$ of a CMV matrix ${\cal C}={\cal M}{\cal L}$, i.e. a linear pencil which we denote by
$$
 {\cal K}_{\lambda_0,\lambda_1} \kern-1pt = \kern-1pt
 \lambda_0{\cal L}+\lambda_1{\cal M} \kern-1pt = \kern-2pt
 \left(
 \begin{smallmatrix}
 	\lambda_0\alpha_0+\lambda_1 &
	\kern-5pt \lambda_0\rho_0
 	\\[5pt]
 	\lambda_0\rho_0 &
	\kern-5pt -\lambda_0\alpha_0+\lambda_1 \alpha_1 &
	\kern-3pt \lambda_1\rho_1
 	\\[5pt]
 	& \kern-5pt \lambda_1\rho_1 &
	\kern-3pt \lambda_0\alpha_2-\lambda_1\alpha_1 &
    \kern-6pt \lambda_0\rho_2
 	\\[5pt]
 	& & \kern-3pt \lambda_0\rho_2 &
	\kern-6pt -\lambda_0\alpha_2+\lambda_1\alpha_3 &
	\kern9pt \lambda_1\rho_3
 	\\[-2pt]
 	& & & \kern-50pt \ddots & \kern-40pt \ddots & \ddots
 \end{smallmatrix}
 \right) \kern-1pt ,
 \kern9pt \lambda_k\in\R\setminus\{0\}.
$$
${\cal K}_{\lambda_0,\lambda_1}$ is a Jacobi matrix when $\lambda_k>0$, otherwise it is related to a true Jacobi matrix ${\cal J}_{\lambda_0,\lambda_1}$ by conjugation with a diagonal sign matrix,
$$
\begin{gathered}
 {\cal J}_{\lambda_0,\lambda_1} =
 \Pi_{\varepsilon_0,\varepsilon_1}
 {\cal K}_{\lambda_0,\lambda_1}
 \Pi_{\varepsilon_0,\varepsilon_1} =
 \left(
 \begin{smallmatrix}
    \\
 	\lambda_0\alpha_0+\lambda_1 &
	\kern-3pt |\lambda_0|\rho_0
 	\\[5pt]
 	|\lambda_0|\rho_0 &
	\kern-3pt -\lambda_0\alpha_0+\lambda_1 \alpha_1 &
	|\lambda_1|\rho_1
 	\\[5pt]
 	& \kern-3pt |\lambda_1|\rho_1 &
	\lambda_0\alpha_2-\lambda_1\alpha_1 &
    \kern-3pt |\lambda_0|\rho_2
 	\\[5pt]
 	& & |\lambda_0|\rho_2 &
	\kern-3pt -\lambda_0\alpha_2+\lambda_1\alpha_3 &
	\kern12pt |\lambda_1|\rho_3
 	\\[-2pt]
 	& & & \kern-60pt \ddots & \kern-55pt \ddots & \kern-5pt \ddots
	\\[2pt]
 \end{smallmatrix}
 \right),
 \\
 \Pi_{\lambda_0,\lambda_1} =
 \left(
 \begin{smallmatrix}
 	\\[2pt]
 	1 \\[1pt]
 	& \kern4pt \varepsilon_0 \\[2pt]
 	& & \kern-2pt \varepsilon_0\varepsilon_1 \\[2pt]
 	& & & \kern-2pt \varepsilon_1 \\[1pt]
 	& & & & \kern4pt 1 \\[1pt]
 	& & & & & \kern4pt \varepsilon_0 \\[2pt]
 	& & & & & & \kern-2pt \varepsilon_0\varepsilon_1 \\[2pt]
 	& & & & & & & \kern-2pt \varepsilon_1 \\[-5pt]
 	& & & & & & & & \ddots
 \\[2pt]
 \end{smallmatrix}
 \right),
 \qquad \varepsilon_k=\sgn(\lambda_k).
\end{gathered}
$$
As in the case of the basic DVZ connection, our aim is to find explicit relations between the measures and orthogonal polynomials associated with ${\cal C}$ and ${\cal J}_{\lambda_0,\lambda_1}$.

Prior to the discussion of these relations we will clarify the OPRL targets of this connection with OPUC. In other words, which Jacobi matrices may be expressed as ${\cal J}_{\lambda_0,\lambda_1}={\cal J}_{\lambda_0,\lambda_1}((\alpha_n)_{n\ge0})$ for some sequence $(\alpha_n)_{n\ge0}$ of real Verblunsky coefficients? This question is answered by the following proposition.

\begin{prop} \label{prop:J-DVZ}
A Jacobi matrix ${\cal J}$ given by \eqref{eq:Jac} has the form ${\cal J}_{\lambda_0,\lambda_1}((\alpha_n)_{n\ge0})$ for some $\lambda_k\in\R\setminus\{0\}$ and a sequence $(\alpha_n)_{n\ge0}$ in $(-1,1)$ iff the complex numbers $z_n=\sum_{j=0}^nb_j+ia_n$ satisfy the following conditions for $n\ge0$:
\begin{itemize}
\item[(i)] All the points $z_n$ with even index $n$ lie in a single circumference $C_0$ centered at a point in $\R\setminus\{0\}$.
\item[(ii)] All the points $z_n$ with odd index $n$ lie in the circumference $C_1$ with the same center as $C_0$ and passing through the origin.
\end{itemize}
If this is the case, $\lambda_1$ is the common center of $C_0$ and $C_1$, $|\lambda_0|$ is the radius of $C_0$ and, for any sign of $\lambda_0$,
\begin{equation} \label{eq:alpha}
 \alpha_n = \frac{1}{\lambda_{\epsilon(n)}}
 \left(\sum_{j=0}^n b_j -\lambda_1\right),
 \qquad \epsilon(n) = n \kern-8pt \mod 2,
 \qquad n\geq 0,
\end{equation}
so that the mapping $(\lambda_0,\lambda_1,(\alpha_n)_{n\ge0}) \mapsto {\cal J}_{\lambda_0,\lambda_1}((\alpha_n)_{n\ge0})$ is one-to-one up to the identity
\begin{equation} \label{eq:inj}
 {\cal J}_{\lambda_0,\lambda_1}((\alpha_n)_{n\ge0}) =
 {\cal J}_{-\lambda_0,\lambda_1}(((-1)^{n+1}\alpha_n)_{n\ge0}).
\end{equation}
\end{prop}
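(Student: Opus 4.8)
The plan is to reduce everything to the cumulative sums $S_n:=\sum_{j=0}^{n}b_j=\re z_n$ together with the observation that the diagonal of ${\cal J}_{\lambda_0,\lambda_1}$ telescopes. First I would read off from the displayed matrix that, writing $\epsilon(n)=n\bmod 2$ and using the convention $\alpha_{-1}=-1$ (so $\epsilon(-1)=1$), the superdiagonal entry in position $n$ is $|\lambda_{\epsilon(n)}|\rho_n$ and the diagonal entry is
$$
 b_n=\lambda_{\epsilon(n)}\alpha_n-\lambda_{\epsilon(n-1)}\alpha_{n-1},\qquad n\ge0,
$$
which indeed reproduces $b_0=\lambda_0\alpha_0+\lambda_1$. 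Summing this telescoping identity gives $S_n=\lambda_{\epsilon(n)}\alpha_n+\lambda_1$, i.e. exactly formula \eqref{eq:alpha}; this single relation drives both implications.

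For the ``only if'' part I would assume ${\cal J}={\cal J}_{\lambda_0,\lambda_1}((\alpha_n))$ and compute, using $\alpha_n^2+\rho_n^2=1$,
$$
 |z_n-\lambda_1|^2=(S_n-\lambda_1)^2+a_n^2=\lambda_{\epsilon(n)}^2\alpha_n^2+\lambda_{\epsilon(n)}^2\rho_n^2=\lambda_{\epsilon(n)}^2 .
$$
Hence the even-index $z_n$ lie on the circle $C_0$ of centre $\lambda_1$ and radius $|\lambda_0|$, while the odd-index $z_n$ lie on the concentric circle $C_1$ of radius $|\lambda_1|$, which passes through $0$ since its centre sits at distance $|\lambda_1|$ from the origin. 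As $\lambda_0,\lambda_1\ne0$ these are genuine circumferences and (i)--(ii) hold; the same computation already shows $\lambda_1$ is the common centre and $|\lambda_0|$ the radius of $C_0$.

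For the converse I would, given (i)--(ii), take $\lambda_1$ to be the common centre (a nonzero real), $|\lambda_0|$ the radius of $C_0$, fix either sign of $\lambda_0$, and \emph{define} $\alpha_n$ by \eqref{eq:alpha}. Condition (ii) forces the radius of $C_1$ to be $|\lambda_1|$, so $|z_n-\lambda_1|=|\lambda_{\epsilon(n)}|$ for all $n$; separating real and imaginary parts yields $(S_n-\lambda_1)^2+a_n^2=\lambda_{\epsilon(n)}^2$, hence $\alpha_n^2=1-a_n^2/\lambda_{\epsilon(n)}^2<1$ (strict because $a_n>0$) and $\rho_n^2:=1-\alpha_n^2=a_n^2/\lambda_{\epsilon(n)}^2$, so the superdiagonal entry $|\lambda_{\epsilon(n)}|\rho_n$ of ${\cal J}_{\lambda_0,\lambda_1}((\alpha_n))$ equals $a_n$. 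Reversing the telescoping, $\lambda_{\epsilon(n)}\alpha_n-\lambda_{\epsilon(n-1)}\alpha_{n-1}=(S_n-\lambda_1)-(S_{n-1}-\lambda_1)=b_n$, so the diagonal matches as well and ${\cal J}={\cal J}_{\lambda_0,\lambda_1}((\alpha_n))$ with $(\alpha_n)_{n\ge0}$ in $(-1,1)$.

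For the injectivity claim I would observe that this reconstruction is forced: condition (ii) together with $a_1>0$ rules out $\re z_1=0$, so $C_1$ is the unique circle with real centre through $0$ and $z_1$, namely centred at $\lambda_1=|z_1|^2/(2\re z_1)$; then $|\lambda_0|=|z_0-\lambda_1|$ and, once the sign of $\lambda_0$ is chosen, the $\alpha_n$ are pinned down by \eqref{eq:alpha}. The two sign choices give the sequences $(\alpha_n)$ and $((-1)^{n+1}\alpha_n)$ and, since $\rho_n=\sqrt{1-\alpha_n^2}$ is unchanged, a one-line comparison of diagonal and superdiagonal entries establishes \eqref{eq:inj}. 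The only place demanding genuine care is this last step --- eliminating the degenerate configurations so that $\lambda_1$ is really a function of ${\cal J}$ --- while the rest is just the telescoping identity and $\alpha_n^2+\rho_n^2=1$ applied along $C_0$ and $C_1$.
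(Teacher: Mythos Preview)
Your proof is correct and follows essentially the same approach as the paper: both reduce the matrix equality to the pair of conditions $S_n-\lambda_1=\lambda_{\epsilon(n)}\alpha_n$ and $a_n=|\lambda_{\epsilon(n)}|\rho_n$, then combine them via $\alpha_n^2+\rho_n^2=1$ to obtain the circle equation $(S_n-\lambda_1)^2+a_n^2=\lambda_{\epsilon(n)}^2$, and run the argument in reverse for the converse. Your treatment of the injectivity is slightly more explicit than the paper's---you actually pin down $\lambda_1=|z_1|^2/(2\re z_1)$ from $z_1$ alone, whereas the paper simply asserts that the only freedom left is the sign of $\lambda_0$---but this is a cosmetic difference, not a different route.
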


\begin{proof}
The equality ${\cal J}={\cal J}_{\lambda_0,\lambda_1}((\alpha_n)_{n\ge0})$ is equivalent to the conditions
\begin{equation} \label{eq:J-DVZ}
 \sum_{j=0}^nb_j -\lambda_1 = \lambda_{\epsilon(n)}\alpha_n,
 \qquad
 a_n = |\lambda_{\epsilon(n)}|\rho_n,
 \qquad
 n\ge0.
\end{equation}
From these conditions we find \eqref{eq:alpha} and
\begin{equation} \label{eq:circ}
 \left(\sum_{j=0}^nb_j -\lambda_1\right)^2 + a_n^2 =
 \lambda_{\epsilon(n)}^2,
 \qquad n\ge0,
\end{equation}
which proves {\it (i)}, {\it (ii)}, as well as the relations between $\lambda_k$ and the circumferences $C_k$. The equality \eqref{eq:inj} follows directly from the explicit form of ${\cal J}_{\lambda_0,\lambda_1}((\alpha_n)_{n\ge0})$.

Suppose now that {\it (i)} and {\it (ii)} are true, which means that \eqref{eq:circ} holds for some $\lambda_k\in\R\setminus\{0\}$, where the sign of $\lambda_0$ may be arbitrarily chosen. Introducing $\rho_n=a_n/|\lambda_{\epsilon(n)}|$ and defining $\alpha_n$ by \eqref{eq:alpha}, the relation \eqref{eq:circ} becomes $\alpha_n^2+\rho_n^2=1$. We conclude that $(\alpha_n)_{n\ge0}$ is a sequence in $(-1,1)$ such that $\rho_n=\sqrt{1-a_n^2}$ and \eqref{eq:J-DVZ} is satisfied, i.e. ${\cal J}={\cal J}_{\lambda_0,\lambda_1}((\alpha_n)_{n\ge0})$. Regarding this equality, according to this discussion, the only freedom in $\lambda_k$ and $\alpha_n$ is the choice of the sign of $\lambda_0$, whose alteration only changes the sign of the parameters $\alpha_n$ with even index $n$. This proves that the map $(\lambda_0,\lambda_1,(\alpha_n)_{n\ge0}) \mapsto {\cal J}_{\lambda_0,\lambda_1}((\alpha_n)_{n\ge0})$ fails to be one-to-one only due to \eqref{eq:inj}.
\end{proof}

The previous proposition states that the Jacobi matrices arising from the present general version of DVZ are generated by selecting a couple of concentric circumferences --$C_0$ and $C_1$-- with center on $\R\setminus\{0\}$, one of them --$C_1$-- passing though the origin. The choice of a sequence $z_n\in C_{\epsilon(n)}$ with positive imaginary part determines the corresponding Jacobi parameters $a_n=\im z_n$ and $b_n=\re(z_n-z_{n-1})$.

Due to the freedom in the sign of $\lambda_0$, we can suppose without loss of generality that $\lambda_0>0$. Also, rewriting
$$
 {\cal J}_{\lambda_0,\lambda_1} = \lambda_0 {\cal J}_\lambda,
 \qquad\quad {\cal J}_\lambda = {\cal J}_{1,\lambda},
 \qquad\quad \lambda=\lambda_1/\lambda_0\in\R\setminus\{0\},
$$
we find that the OPRL $p^{(\lambda_0,\lambda_1)}_n$ and the measure $\nu_{\lambda_0,\lambda_1}$ related to ${\cal J}_{\lambda_0,\lambda_1}$ follow by a simple dilation of the OPRL $p^{(\lambda)}_n$ and the measure $\nu_\lambda$ associated with ${\cal J}_\lambda$,
$$
 p^{(\lambda_0,\lambda_1)}_n(x) = p^{(\lambda)}_n(x/\lambda_0),
 \qquad\quad
 d\nu_{\lambda_0,\lambda_1}(x) = d\nu_\lambda(x/\lambda_0).
$$
Besides,
$$
 {\cal J}_\lambda =
 \begin{cases}
 	{\cal K}_\lambda, & \lambda>0,
 	\\
 	\Pi{\cal K}_\lambda\Pi, & \lambda<0,
 \end{cases}
 \qquad\quad
 {\cal K}_\lambda = {\cal K}_{1,\lambda} = {\cal L}+\lambda{\cal M},
$$
where $\Pi$ is the diagonal sign matrix given in \eqref{eq:Pi}. Thus, the OPRL sequence $p^{(\lambda)}=(p^{(\lambda)}_0,p^{(\lambda)}_0,\dots)^t$ given by ${\cal J}_\lambda p^{(\lambda)}(x)=xp^{(\lambda)}(x)$, $p^{(\lambda )}_0=1$, is related by $p^{(\lambda)}=\Pi q^{(\lambda)}$ with the solutions of
\begin{equation} \label{eq:Kq}
 {\cal K}_\lambda q^{(\lambda)}(x) = x q^{(\lambda)}(x),
 \qquad\quad
 q^{(\lambda)}=(q^{(\lambda)}_0,q^{(\lambda)}_1,\dots)^t,
 \qquad\quad q_0^{(\lambda)}=1.
\end{equation}
Hence, the polynomials $q_n^{(\lambda)}$ and $p_n^{(\lambda)}$ differ in at most a sign, so $q^{(\lambda)}$ is also an OPRL sequence with respect to the measure $\nu_\lambda$. Due to these reasons, in what follows we will consider only linear combinations of CMV factors with the form ${\cal K}_\lambda$ for an arbitrary $\lambda\in\R\setminus\{0\}$ , referring to the solutions $q_n^{(\lambda)}$ of \eqref{eq:Kq} as the corresponding OPRL. We will refer to the problem of finding the relations between the orthogonal polynomials and measures related to a CMV matrix ${\cal C}={\cal M}{\cal L}$ and the tridiagonal matrix ${\cal K}_\lambda={\cal L}+\lambda{\cal M}$ as the (general) DVZ connection between OPUC and OPRL.

This general version of DVZ was already considered in \cite{DVZ}. Nevertheless, unlike the case of the basic DVZ connection, \cite{DVZ} does not provide any closed formula for the relations between the orthogonal polynomials and measures associated with a CMV matrix ${\cal C}={\cal M}{\cal L}$ and its (general) DVZ transform,
\begin{equation} \label{eq:Kl}
 {\cal K}_\lambda = {\cal L}+\lambda{\cal M} =
 \begin{pmatrix}
 	\alpha_0+\lambda &
	\kern-3pt \rho_0
 	\\[2pt]
 	\rho_0 &
	\kern-3pt \lambda \alpha_1-\alpha_0 &
	\kern-3pt \lambda\rho_1
 	\\[2pt]
 	& \kern-3pt \lambda\rho_1 &
	\kern-3pt \alpha_2-\lambda\alpha_1 &
    \kern-3pt \rho_2
 	\\[2pt]
 	& & \kern-3pt \rho_2 &
	\kern-3pt \lambda\alpha_3-\alpha_2 &
	\kern12pt \lambda\rho_3
 	\\
 	& & & \kern-67pt \ddots & \kern-47pt \ddots & \ddots
 \end{pmatrix}.
\end{equation}
This will change with the present approach, whose simplicity leads to completely explicit expressions for such general DVZ relations between orthogonal polynomials and measures.

\subsection {OPRL for the general DVZ connection}
\label{ssec:OP-gDVZ}

We are looking for the solutions of the formal eigenvalue equation
\begin{equation} \label{eq:Kl-ql}
 {\cal K}_\lambda q^{(\lambda)}(x) = x q^{(\lambda)}(x),
 \qquad\quad
 q^{(\lambda)} = (q^{(\lambda)}_0,q^{(\lambda)}_1,q^{(\lambda)}_2,\dots)^t,
 \qquad\quad q^{(\lambda)}_0=1,
\end{equation}
which yield an OPRL sequence $q_n^{(\lambda)}$ with positive leading coefficients for $\lambda>0$, and having a sign given by the diagonal entries of $\Pi$ in \eqref{eq:Pi} if $\lambda<0$. The same method used previously for $\lambda=\pm1$ provides the relation between the OPRL $q_n^{(\lambda)}$ and the OLPUC $\chi_n$ of ${\cal C}$ for any value of $\lambda\in\R\setminus\{0\}$. Since
$$
 {\cal K}_\lambda\chi(z)=(z+\lambda)\chi_*(z),
 \qquad\quad
 {\cal K}_\lambda\chi_*(z)=(z^{-1}+\lambda)\chi(z),
$$
${\cal K}_\lambda$ has formal eigenvectors $u(z)\chi(z)+v(z)\chi_*(z)$ determined by
$$
 \begin{pmatrix}
 0 & z^{-1}+\lambda
 \\
 z+\lambda & 0
 \end{pmatrix}
 \begin{pmatrix} u(z) \\ v(z) \end{pmatrix} = x
 \begin{pmatrix} u(z) \\ v(z) \end{pmatrix}.
$$
The eigenvalues $x$ are the solutions of
\begin{equation} \label{eq:x-l}
 x^2 = (z+\lambda)(z^{-1}+\lambda)
 = 1+\lambda^2 +\lambda(z+z^{-1}),
\end{equation}
the corresponding eigenvectors being spanned by
\begin{equation} \label{eq:Xl}
 X(z) = \chi(z) + \frac{z+\lambda}{x} \chi_*(z).
\end{equation}
This suggests the identification
\begin{equation} \label{eq:ql}
 q_n^{(\lambda)}(x) = \frac{X_n(z)}{X_0(z)}
 = \frac{x\chi_n(z)+(z+\lambda)\chi_{n*}(z)}{x+z+\lambda},
 \qquad
 z=z(x),
\end{equation}
where $z=z(x)$ is a complex mapping satisfying \eqref{eq:x-l}. The validity of this result only needs to check that the above expression yields a well defined function of $x$ because then \eqref{eq:Kl-ql} follows from the eigenvector property of $X(z)$. However, \eqref{eq:x-l} determines $x$ as function of $z$ only up to a sign which could depend arbitrarily on $z$. Also, although $z \mapsto 1+\lambda^2+\lambda(z+z^{-1})$ maps $\C\setminus\{0\}$ onto $\C$, there are in general two values of $z$ giving the same value of $x^2$ which are inverse of each other. Therefore, all that can be said is that \eqref{eq:x-l} establishes a one-to-one correspondence between subsets $\{z,z^{-1}\}\subset\C\setminus\{0\}$ and subsets $\{x,-x\}\subset\C$. As a consequence, there are infinitely many functions $z=z(x)$ well defined on the whole complex plane and satisfying \eqref{eq:x-l}, so that \eqref{eq:ql} becomes a true function of $x$ when substituting $z$ by $z(x)$. The fact that, for any of these choices, $q_n^{(\lambda)}(x)$ satisfies \eqref{eq:Kl-ql}, not only identifies these functions as the OPRL related to ${\cal K}_\lambda$, but consequently shows that $q_n^{(\lambda)}(x)$ do not depend on the particular choice of $z(x)$. A more direct proof of this result is given by the following theorem, which is of practical interest since it provides an alternative expression of $q_n^{(\lambda)}(x)$ which turns out to be more useful to identify its dependence on $x$ in particular situations.

\begin{thm} \label{thm:ql}
Let $\chi_n$ be the OLPUC of a real CMV matrix with $\Theta$-factorization ${\cal C}={\cal M}{\cal L}$. Then, the function $q_n^{(\lambda)}(x)$ given in \eqref{eq:ql} does not depend on the map $z=z(x)$ satisfying \eqref{eq:x-l}, and is a real polynomial of degree $n$ with the form
$$
 q_n^{(\lambda)}(x) =
 Q_n(t_\lambda(x)) + (x-\lambda)\widetilde{Q}_n(t_\lambda(x)),
 \qquad
 t_\lambda(x) = \lambda^{-1} x^2 -(\lambda+\lambda^{-1}),
$$
where $Q_n$ and $\widetilde{Q}_n$ are real polynomials independent of $\lambda$ with degree
$$
 \begin{aligned}
 	& \deg Q_n = \textstyle \frac{n}{2}, & & \text{even} \; n,
	\\
	& \deg Q_n \le \textstyle \frac{n-1}{2} & & \text{odd} \; n,
 \end{aligned}
 \qquad\quad
 \deg \widetilde{Q}_n = \left[\textstyle\frac{n-1}{2}\right],
$$
such that
$$
 Q_n(z+z^{-1}) = \frac{1}{z-z^{-1}}
 \begin{vmatrix}
 	z & \kern-3pt z^{-1}
	\\
 	\chi_n(z) & \kern-3pt \chi_{n*}(z)
 \end{vmatrix},
 \qquad
 \widetilde{Q}_n(z+z^{-1}) = \frac{1}{z-z^{-1}}
 \begin{vmatrix}
 	\chi_n(z) & \kern-3pt \chi_{n*}(z)
	\\
	1 & \kern-3pt 1
 \end{vmatrix}.
$$
If $\chi_n(z) = \sum_j c_j^{(n)}z^j$, then
$$
 Q_n(t) = \sum_{j\ge0} (c_{-j}^{(n)}-c_{j+2}^{(n)}) U_j(t),
 \qquad
 \widetilde{Q}_n(t) = \sum_{k\ge0} (c_{j+1}^{(n)}-c_{-j-1}^{(n)}) U_j(t),
$$
where $U_n$ stand for the second kind Chebyshev polynomials on $[-2,2]$, determined by the recurrence relation
\begin{equation} \label{eq:cheb}
 U_{-1}(t)=0, \qquad
 U_0(t)=1, \qquad
 U_n(t) = tU_{n-1}(t)-U_{n-2}(t) \; \text{ if } \; n\ge1.
\end{equation}
The polynomials $q_n^{(\lambda)}(x)$ satisfy \eqref{eq:Kl-ql}, i.e. they are the OPRL related to the tridiagonal matrix ${\cal K}_\lambda={\cal L}+\lambda{\cal M}$.
\end{thm}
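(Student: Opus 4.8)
The plan is to read off the explicit polynomial form of $q_n^{(\lambda)}$ directly from \eqref{eq:ql} by one well-chosen rationalization, and only afterwards to check that the polynomial so obtained genuinely solves \eqref{eq:Kl-ql}. Two elementary facts will be used throughout: for a real CMV matrix the starred Laurent polynomials satisfy $\chi_{n*}(z)=\chi_n(z^{-1})$ (a standard property of the OLPUC; it also follows from ${\cal C}^\dag\chi_*(z)=z\chi_*(z)$ together with ${\cal C}^\dag\chi(z^{-1})=z\chi(z^{-1})$ and the normalization $\chi_{0*}=\chi_0=1$); and $z+z^{-1}=t_\lambda(x)$, which is simply \eqref{eq:x-l} solved for $z+z^{-1}$.

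\emph{The rationalization.} Fixing $x$ and a root $z=z(x)$ of \eqref{eq:x-l} with $z\neq\pm1$, and avoiding the finitely many $x$ at which a denominator below vanishes, I would multiply numerator and denominator of \eqref{eq:ql} by $x-z^{-1}-\lambda$. Using $(z+\lambda)(z^{-1}+\lambda)=x^2$, the denominator becomes $(x+z+\lambda)(x-z^{-1}-\lambda)=x(z-z^{-1})$ and the numerator becomes $x^2\big(\chi_n(z)-\chi_{n*}(z)\big)+x\big[(z+\lambda)\chi_{n*}(z)-(z^{-1}+\lambda)\chi_n(z)\big]$; after cancelling one factor $x$ and splitting the $\lambda$-part off the bracket, one would obtain
$$
 q_n^{(\lambda)}(x)=\frac{z\chi_{n*}(z)-z^{-1}\chi_n(z)}{z-z^{-1}}+(x-\lambda)\,\frac{\chi_n(z)-\chi_{n*}(z)}{z-z^{-1}}.
$$
Because $\chi_{n*}(z)=\chi_n(z^{-1})$, each numerator here is an antisymmetric Laurent polynomial in $z$, and an antisymmetric Laurent polynomial divided by the antisymmetric $z-z^{-1}$ is a symmetric Laurent polynomial, hence a polynomial in $z+z^{-1}$; these are the polynomials $Q_n,\widetilde{Q}_n$ of the statement (the two displayed determinants), real because the $\chi_n$ have real coefficients, and manifestly free of $\lambda$. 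Substituting $z+z^{-1}=t_\lambda(x)$ gives the claimed identity $q_n^{(\lambda)}(x)=Q_n(t_\lambda(x))+(x-\lambda)\widetilde{Q}_n(t_\lambda(x))$, whose right-hand side is a genuine polynomial in $x$; since it agrees off a finite set with the a priori $z$-dependent expression \eqref{eq:ql}, this simultaneously establishes that $q_n^{(\lambda)}$ does not depend on the choice of $z=z(x)$ and that it is a real polynomial.

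\emph{Chebyshev coefficients and degrees.} Writing $\chi_n(z)=\sum_j c_j^{(n)}z^j$, so that $\chi_{n*}(z)=\sum_j c_{-j}^{(n)}z^j$, an antisymmetric Laurent polynomial $\sum_i d_i z^i$ (where $d_{-i}=-d_i$) equals $\sum_{i\ge1}d_i(z^i-z^{-i})$, and $\frac{z^i-z^{-i}}{z-z^{-1}}=U_{i-1}(z+z^{-1})$ for exactly the normalization \eqref{eq:cheb}, so after reindexing $\frac{\sum_i d_i z^i}{z-z^{-1}}=\sum_{j\ge0}d_{j+1}U_j(z+z^{-1})$; applying this to $z\chi_{n*}-z^{-1}\chi_n$ and to $\chi_n-\chi_{n*}$ yields the two Chebyshev series. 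For the degrees one reads from \eqref{eq:OLPUC-OPUC} that $\chi_n$ is supported on the exponents $-\lfloor n/2\rfloor,\dots,\lceil n/2\rceil$, with lowest coefficient $\kappa_n\neq0$ for even $n$ and $-\kappa_n\alpha_{n-1}$ for odd $n$ (which vanishes precisely when $\alpha_{n-1}=0$), and highest coefficient $-\kappa_n\alpha_{n-1}$ for even $n$ and $\kappa_n\neq0$ for odd $n$; tracking the top surviving Chebyshev coefficient in each parity then gives $\deg Q_n=n/2$ for even $n$, $\deg Q_n\le(n-1)/2$ for odd $n$ (with equality iff $\alpha_{n-1}\neq0$), and $\deg\widetilde{Q}_n=\lfloor(n-1)/2\rfloor$ (for even $n$ using $|\alpha_{n-1}|<1$). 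Since $\deg t_\lambda=2$, this forces $\deg q_n^{(\lambda)}=n$, the leading term coming from $Q_n(t_\lambda(x))$ for even $n$ and from $(x-\lambda)\widetilde{Q}_n(t_\lambda(x))$ for odd $n$.

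\emph{Solving the eigenvalue equation, and the main obstacle.} To conclude, for any $x,z$ with \eqref{eq:x-l} the relations ${\cal K}_\lambda\chi(z)=(z+\lambda)\chi_*(z)$ and ${\cal K}_\lambda\chi_*(z)=(z^{-1}+\lambda)\chi(z)$ give ${\cal K}_\lambda X(z)=x\,X(z)$ for the vector $X(z)$ of \eqref{eq:Xl}; dividing by $X_0(z)=x+z+\lambda$ shows that the vector with components \eqref{eq:ql} satisfies ${\cal K}_\lambda q^{(\lambda)}(x)=x\,q^{(\lambda)}(x)$ and $q_0^{(\lambda)}(x)=1$ for all $x$ off a finite set. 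As ${\cal K}_\lambda$ is tridiagonal, each component of ${\cal K}_\lambda q^{(\lambda)}(x)-x\,q^{(\lambda)}(x)$ is a polynomial in $x$ (by the previous step) that vanishes off a finite set, hence is identically zero; so the $q_n^{(\lambda)}$ are the OPRL attached to ${\cal K}_\lambda={\cal L}+\lambda{\cal M}$. The one genuinely delicate point is spotting the correct rationalizing factor $x-z^{-1}-\lambda$ — equivalently, exploiting the $z\leftrightarrow z^{-1}$ symmetry built into \eqref{eq:x-l} — which is exactly what collapses the sign-ambiguous, $z$-dependent quotient \eqref{eq:ql} into a true polynomial in $x$ of the advertised shape; everything afterwards (the Chebyshev bookkeeping, the parity-by-parity degree count) is routine, the only subtlety being the possible vanishing of the lowest coefficient of $\chi_n$ for odd $n$ with $\alpha_{n-1}=0$, which is precisely why $\deg Q_n$ is only bounded in that case.
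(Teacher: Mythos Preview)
Your proof is correct and follows essentially the same route as the paper's. The only cosmetic difference is the choice of rationalizing factor: you multiply by $x-z^{-1}-\lambda$ (giving denominator $x(z-z^{-1})$), whereas the paper multiplies by $x-z-\lambda$ (giving denominator $(z+\lambda)(z^{-1}-z)$); both collapse to the identical determinantal expression $Q_n(z+z^{-1})+(x-\lambda)\widetilde{Q}_n(z+z^{-1})$, and the remaining Chebyshev bookkeeping, degree count, and eigenvector verification are the same---your version is in fact slightly more careful about the ``off a finite set'' continuation argument and about why only an inequality holds for $\deg Q_n$ when $n$ is odd.
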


\begin{proof}
Given any map $z=z(x)$ satisfying \eqref{eq:x-l}, from \eqref{eq:ql} we find that
$$
\begin{aligned}
 q_n^{(\lambda)}(x)
 & = \frac{(x\chi_n(z)+(z+\lambda)\chi_{n*}(z)) (x-z-\lambda)}
	{x^2-(z+\lambda)^2}
 \\
 & = \frac{x^2\chi_n(z) - x(z+\lambda)\chi_n(z)
 		+ x(z+\lambda)\chi_{n*}(z) - (z+\lambda)^2\chi_{n*}(z)}
 	{(z+\lambda)(z^{-1}-z)}
 \\
 & = \frac{(z+\lambda)\chi_{n*}(z) -  (z^{-1}+\lambda)\chi_n(z)
 		+ x\chi_n(z) - x\chi_{n*}(z)}
 	{z-z^{-1}}
	\\
 & = \frac{1}{z-z^{-1}}
 	\left(
 	\begin{vmatrix}
 		z & \kern-3pt z^{-1}
		\\
 		\chi_n(z) & \kern-3pt \chi_{n*}(z)
 	\end{vmatrix}
 	+ (x-\lambda)
 	\begin{vmatrix}
 		\chi_n(z) & \kern-3pt \chi_{n*}(z)
		\\
		1 & \kern-3pt 1
 	\end{vmatrix}
	\right).
\end{aligned}
$$
Since the columns of the above determinants are related by the substar operation, they must be real linear combinations of $z^j-z^{-j}$. Bearing in mind that
\begin{equation} \label{eq:U-z}
 U_j(z+z^{-1}) = \frac{z^{j+1}-z^{-j-1}}{z-z^{-1}},
\end{equation}
we conclude that both determinants, when divided by $z-z^{-1}$, become real linear combinations of $U_j(z+z^{-1})$, which thus have the form $Q_n(z+z^{-1})$ and $\widetilde{Q}_n(z+z^{-1})$ for some real polynomials $Q_n$ and $\widetilde{Q}_n$. The relations \eqref{eq:OLPUC-OPUC} lead to
$$
\begin{aligned}
 & \deg Q_{2k} = k, & \qquad & \deg Q_{2k+1} \le k,
 \\
 & \deg \widetilde{Q}_{2k} = k-1, & & \deg \widetilde{Q}_{2k+1} = k.
\end{aligned}
$$
This implies that $\deg q_n^{(\lambda)}=n$ because $z+z^{-1}=\lambda^{-1}x^2-(\lambda+\lambda^{-1})$ due to \eqref{eq:x-l}.

Introducing the full expansion $\chi_n(z) = \sum_k c_j^{(n)}z^j$ into the determinants defining $Q_n$ and $\widetilde{Q}_n$ we obtain
$$
 Q_n(t) = - \sum_j c_j^{(n)} U_{j-2}(t),
 \qquad
 \widetilde{Q}_n(t) = \sum_j c_j^{(n)} U_{j-1}(t),
$$
where we assume \eqref{eq:U-z} extended to every $j\in\Z$. The relations given in the theorem follow by noticing that $U_{-j}=-U_{j-2}$.

Finally, the relation $q^{(\lambda)}(x) = X(z)/X_0(z)$ shows that it is a formal eigenvector of ${\cal K}_\lambda$ with eigenvalue $x$ and such that $q_0^{(\lambda)}=1$, i.e. it satisfies \eqref{eq:Kl-ql}.
\end{proof}

\subsection {Orthogonality measure for the general DVZ connection}
\label{ssec:M-gDVZ}

To obtain the orthogonality measure $\nu_\lambda$ of the OPRL $q_n^{(\lambda)}$   we will introduce the measures $\mu_+$ and $\mu_-$ induced by $\mu$ on the upper and lower arcs of the unit circle, $\T_+=\{e^{i\theta}:\theta\in[0,\pi]\}$ and $\T_-=\{e^{i\theta}:\theta\in[-\pi,0]\}$, with any eventual mass point of $\mu$ at $\pm1$ equally distributed between $\mu_+$ and $\mu_-$. Then, we can rewrite any integral with respect to $\mu$ as
$$
 \int_\T f(z) \, d\mu(z)
 = \int_{\T_+} f(z) \, d\mu_+(z)
 + \int_{\T_-} f(z) \, d\mu_-(z).
$$
Besides, the symmetry of $\mu$ under conjugation means that $d\mu_+(z)=d\mu_-(z^{-1})$, so that
$$
 \int_\T f(z) \, d\mu(z)
 = \int_{\T_+} (f(z)+f(z^{-1})) \, d\mu_+(z).
$$

The orthogonality measure $\nu_\lambda$ will arise from the orthogonality of the components of the eigenvector \eqref{eq:Xl} with respect to $\mu$, a result given in the next proposition. To make it more precise and prove this orthogonality we define the following map on $\T$
\begin{equation} \label{eq:x-t}
 x(e^{i\theta}) = \sqrt{1+\lambda^2+2\lambda\cos\theta},
\end{equation}
which satisfies \eqref{eq:x-l} and $x(z^{-1})=x(z)$. When restricted to $\T_\pm$, \eqref{eq:x-t} provides a homeomorphism between $\T_\pm$ and the interval $[|1-|\lambda||,1+|\lambda|]$, with an inverse mapping given by $z(x)=e^{\pm i\theta(x)}$, where
\begin{equation} \label{eq:t-x}
 \theta(x) = \arccos\frac{t_\lambda(x)}{2},
 \qquad
 t_\lambda(x) = \lambda^{-1}x^2-(\lambda+\lambda^{-1}).
\end{equation}
Now we can formulate the precise meaning of the orthogonality property for the eigenvector \eqref{eq:Xl}.

\begin{prop} \label{prop:X-ort}
Let $\chi_n$ be the OLPUC associated with a measure $\mu$ on $\T$ symmetric under conjugation. Consider the measures $\mu_\pm$ induced by $\mu$ on the upper and lower arcs $\T_\pm$, with any mass point of $\mu$ at $\pm1$ equally distributed between $\mu_\pm$. Then, if $x(z)$ is given by \eqref{eq:x-t} for $z\in\T$,
$$
\begin{gathered}
 \int_{\T_+} X^+(z)X^+(z)^\dag \, d\mu_+(z) +
 \int_{\T_-} X^-(z)X^-(z)^\dag \, d\mu_-(z) = 2I,
 \\
 X^\pm(z) = \chi(z) \pm \frac{z+\lambda}{x(z)} \chi_*(z).
\end{gathered}
$$

\end{prop}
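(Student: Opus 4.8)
The plan is to follow the $\lambda=\pm1$ computation of Section~\ref{sec:DVZ}: expand the integrands $X^\pm(z)X^\pm(z)^\dag$, isolate the $\pm$‑independent part and recognize it as two copies of the orthonormality relation for the OLPUC, and then check that the remaining cross terms cancel once the two arcs are identified through the involution $z\mapsto z^{-1}$. Throughout write $c(z)=(z+\lambda)/x(z)$, so $X^\pm(z)=\chi(z)\pm c(z)\chi_*(z)$. For $z\in\T$ one has $x(z)\ge0$ and, by \eqref{eq:x-l}, $x(z)^2=(z+\lambda)(z^{-1}+\lambda)=|z+\lambda|^2$; hence $|c(z)|=1$ and $\overline{c(z)}=(z^{-1}+\lambda)/x(z)$. (When $|\lambda|=1$ the function $x$ vanishes at the single point $z=-\lambda\in\T$, but there $x(z)\chi(z)\pm(z+\lambda)\chi_*(z)$ is still well defined and the cases $\lambda=\pm1$ are already covered by Section~\ref{sec:DVZ}; so we may assume $x(z)>0$ on $\T$.)

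Two ingredients are used. First, since the Verblunsky coefficients are real, $\varphi_n^*(z)=z^n\varphi_n(1/z)$, and inserting this into \eqref{eq:OLPUC-OPUC} gives the substar identity $\chi_{n*}(z)=\chi_n(z^{-1})$ (equivalently, $\chi_*(z)$ is the formal eigenvector of ${\cal C}={\cal M}{\cal L}$ with eigenvalue $z^{-1}$ normalized by $\chi_{0*}=1$, which is $\chi(z^{-1})$). Second, since $\chi_*={\cal M}\chi$ with ${\cal M}$ unitary, the $\chi_{n*}$ are again orthonormal with respect to $\mu$, so that $\int_\T\chi(z)\chi(z)^\dag\,d\mu(z)=\int_\T\chi_*(z)\chi_*(z)^\dag\,d\mu(z)=I$. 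Expanding and using $|c|=1$,
$$
 X^\pm(z)X^\pm(z)^\dag
 =\chi(z)\chi(z)^\dag+\chi_*(z)\chi_*(z)^\dag
 \pm\bigl(c(z)\,\chi_*(z)\chi(z)^\dag+\overline{c(z)}\,\chi(z)\chi_*(z)^\dag\bigr).
$$
Integrating the $\pm$‑independent part over $\T_+$ against $d\mu_+$ and over $\T_-$ against $d\mu_-$ and adding gives $\int_\T\bigl(\chi\chi^\dag+\chi_*\chi_*^\dag\bigr)\,d\mu=2I$, already the asserted value.

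It then remains to show that the cross terms contribute nothing to the sum, that is,
$$
 \int_{\T_+}\bigl(c\,\chi_*\chi^\dag+\overline{c}\,\chi\chi_*^\dag\bigr)\,d\mu_+
 =\int_{\T_-}\bigl(c\,\chi_*\chi^\dag+\overline{c}\,\chi\chi_*^\dag\bigr)\,d\mu_-.
$$
For this I would invoke the conjugation symmetry of $\mu$ in the form $d\mu_+(z)=d\mu_-(z^{-1})$ and substitute $z\mapsto z^{-1}$ in the right-hand integral, which maps $\T_-$ onto $\T_+$. Under this substitution $\chi_*(z^{-1})=\chi(z)$, $\chi(z^{-1})=\chi_*(z)$, $x(z^{-1})=x(z)$ and $c(z^{-1})=(z^{-1}+\lambda)/x(z)=\overline{c(z)}$, so the integrand $c\,\chi_*\chi^\dag+\overline{c}\,\chi\chi_*^\dag$ is carried exactly to $\overline{c(z)}\,\chi(z)\chi_*(z)^\dag+c(z)\,\chi_*(z)\chi(z)^\dag$, which is the left-hand integrand; the two integrals therefore coincide and the sum in the statement equals $2I$.

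The one point that needs care is the sign bookkeeping in the last step: it is exactly the opposite signs in $X^+$ (integrated over $\T_+$) and in $X^-$ (over $\T_-$) that force the cross terms to enter with opposite signs and cancel, in complete parallel with the role of the sign change $e^{i\theta/2}\mapsto-e^{-i\theta/2}$ in the $\lambda=\pm1$ argument. Everything else reduces to the expansion above together with the orthonormality of $\{\chi_n\}$ and $\{\chi_{n*}\}$ and the elementary identities $|z+\lambda|^2=x(z)^2$ and $\chi_{n*}(z)=\chi_n(z^{-1})$ on $\T$.
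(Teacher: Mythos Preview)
Your proof is correct and follows essentially the same route as the paper's: expand $X^\pm(X^\pm)^\dag$, use the orthonormality of $\chi$ and $\chi_*$ to get $2I$ from the diagonal part, and cancel the cross terms by noting that the integrand $c\,\chi_*\chi^\dag+\overline{c}\,\chi\chi_*^\dag$ is invariant under $z\mapsto z^{-1}$ while this involution exchanges $\mu_+$ and $\mu_-$. The only difference is presentational --- you introduce $c(z)$ and spell out the identities $\chi_*(z)=\chi(z^{-1})$ and $c(z^{-1})=\overline{c(z)}$ explicitly, whereas the paper phrases the same step as ``the integrands are invariant under conjugation of $z$, which exchanges $\mu_-$ and $\mu_+$''.
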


\begin{proof}
Since $x(z)$ satisfies \eqref{eq:x-l}, the above sum of integrals can be rewritten as
$$
\begin{aligned}
 & \int_\T (\chi(z)\chi(z)^\dag + \chi_*(z)\chi_*(z)^\dag) \, d\mu(z)
 \\
 & + \int_{\T_+}
 \left(\frac{z+\lambda}{x(z)} \chi_*(z)\chi(z)^\dag +
 \frac{z^{-1}+\lambda}{x(z)} \chi(z)\chi_*(z)^\dag\right) d\mu_+(z)
 \\
 & - \int_{\T_-}
 \left(\frac{z+\lambda}{x(z)} \chi_*(z)\chi(z)^\dag +
 \frac{z^{-1}+\lambda}{x(z)} \chi(z)\chi_*(z)^\dag\right) d\mu_-(z)
\end{aligned}
$$
The OLPUC $\chi_n$ have real coefficients due to the symmetry of $\mu$ under conjugation. As a consequence, the integrands of the last two summands are invariant under conjugation of $z$. Since this operation exchanges $\mu_-$ and $\mu_+$, the last two terms cancel each other. On the other hand, the orthogonality of $\chi_n$ and $\chi_{n*}$ with respect to $\mu$ implies that the first integral is $2I$.
\end{proof}

The orthogonality measure of $q_n^{(\lambda)}$ arises as a consequence of the previous result. Except for the cases $\lambda=\pm1$, it is a measure supported on two disjoint symmetric intervals.

\begin{thm} \label{thm:nul}
If $\mu$ is a measure on $\T$ which is symmetric under conjugation, then the OPRL $q_n^{(\lambda)}(x)$ of the corresponding general DVZ transform ${\cal K}_\lambda$ are orthonormal with respect to the measure
$$
\begin{gathered}
 d\nu_\lambda(x) =
 \frac{(x+\lambda)^2-1}{2\lambda x} \,
 (d\mu_+(e^{i\theta_+(x)}) + d\mu_-(e^{-i\theta_-(x)})),
 \qquad
 x \in E_\lambda = E_\lambda^+ \cup E_\lambda^-,
 \\[2pt]
 E_\lambda^+ = [|1-|\lambda||,1+|\lambda|],
 \qquad
 E_\lambda^-=-E_\lambda^+,
\end{gathered}
$$
where $\theta_\pm(x)$ is the homeomorphism between $E_\lambda^\pm$ and $[0,\pi]$ given by the restriction of $\theta(x)$ in \eqref{eq:t-x} to $E_\lambda^\pm$, and $\mu_\pm$ is the measure induced by $\mu$ on $\T_\pm$ with the mass points of $\mu$ at $\pm1$ equally distributed between $\mu_\pm$.
\end{thm}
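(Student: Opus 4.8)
The plan is to read off $\nu_\lambda$ directly from Proposition~\ref{prop:X-ort} by transporting the two integrals over $\T_+$ and $\T_-$ to the real intervals $E_\lambda^+$ and $E_\lambda^-$ through the homeomorphisms $x=x(z)$ of \eqref{eq:x-t}. First I would relate the components of the vectors $X^\pm$ to the OPRL $q_n^{(\lambda)}$: since $\chi_0=\chi_{0*}=1$, the vector $X^\pm(z)$ is precisely the eigenvector \eqref{eq:Xl} evaluated at the branch $x=\pm x(z)$ of \eqref{eq:x-l}, so clearing the denominator in \eqref{eq:ql} (and using $\chi_{n*}(\pm1)=\chi_n(\pm1)$ to cover the few points where that denominator could vanish) yields
$$
 X_n^\pm(z) = q_n^{(\lambda)}(\pm x(z))\,X_0^\pm(z), \qquad z\in\T,
$$
and hence, $q_n^{(\lambda)}$ being a \emph{real} polynomial by Theorem~\ref{thm:ql},
$$
 X^\pm(z)\,X^\pm(z)^\dag = |X_0^\pm(z)|^2\; q^{(\lambda)}(\pm x(z))\,q^{(\lambda)}(\pm x(z))^t.
$$

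Next I would substitute this into Proposition~\ref{prop:X-ort} and change variables by $z=e^{i\theta_+(x)}$ for $x\in E_\lambda^+$ in the $\T_+$-integral and by $z=e^{-i\theta_-(x)}$ for $x\in E_\lambda^-$ in the $\T_-$-integral; here one uses that $\cos\theta_\pm(x)=t_\lambda(x)/2$ and \eqref{eq:x-l} give $x(e^{i\theta_+(x)})=x$ on $E_\lambda^+$ and $x(e^{-i\theta_-(x)})=-x$ on $E_\lambda^-$, so that the two families $q_n^{(\lambda)}(\pm x(z))$ glue into the single family $q_n^{(\lambda)}(x)$ over $E_\lambda=E_\lambda^+\cup E_\lambda^-$. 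In this way the matrix identity of Proposition~\ref{prop:X-ort} turns into
$$
 \int_{E_\lambda} q^{(\lambda)}(x)\,q^{(\lambda)}(x)^t\, d\nu_\lambda(x) = I,
 \qquad
 d\nu_\lambda(x) = \tfrac12\,|X_0^\pm(z(x))|^2\, d\mu_\pm(z(x)) \ \text{ on }\ E_\lambda^\pm.
$$

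It then remains to evaluate $|X_0^\pm(z)|^2$. Writing $x$ for the relevant branch value ($x=x(z)$ on $\T_+$, $x=-x(z)$ on $\T_-$) and using $\overline z=z^{-1}$ on $\T$ together with $(z+\lambda)(z^{-1}+\lambda)=x^2$ and $z+z^{-1}=t_\lambda(x)=\lambda^{-1}x^2-(\lambda+\lambda^{-1})$ from \eqref{eq:x-l}, a short computation will give in both cases
$$
 |X_0^\pm(z)|^2
 = \Bigl(1\pm\tfrac{z+\lambda}{x(z)}\Bigr)\Bigl(1\pm\tfrac{z^{-1}+\lambda}{x(z)}\Bigr)
 = 2 + \frac{z+z^{-1}+2\lambda}{x}
 = 2 + \frac{x^2+\lambda^2-1}{\lambda x}
 = \frac{(x+\lambda)^2-1}{\lambda x},
$$
whence $d\nu_\lambda(x)=\dfrac{(x+\lambda)^2-1}{2\lambda x}\,d\mu_\pm(z(x))$ on $E_\lambda^\pm$, the stated formula. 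That the image of $\T_\pm$ under $x(z)$ is exactly $E_\lambda^\pm$ follows because $1+\lambda^2+2\lambda\cos\theta$ runs over $[(1-|\lambda|)^2,(1+|\lambda|)^2]$ as $\theta$ runs over $[0,\pi]$, the two intervals being disjoint unless $|\lambda|=1$. Finally, since $\deg q_n^{(\lambda)}=n$ and $q_0^{(\lambda)}=1$ by Theorem~\ref{thm:ql}, the relation $\int_{E_\lambda}q_m^{(\lambda)}q_n^{(\lambda)}\,d\nu_\lambda=\delta_{mn}$ says exactly that the $q_n^{(\lambda)}$ are the orthonormal polynomials of $\nu_\lambda$; in particular $\nu_\lambda$ is automatically a positive probability measure, as one also sees directly from the sign of $\frac{(x+\lambda)^2-1}{2\lambda x}$ on $E_\lambda$.

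The step I expect to demand the most care is bookkeeping rather than anything deep: one has to keep the two square-root branches of \eqref{eq:x-l} rigidly attached to $\T_+$ and to $\T_-$ so that after the change of variables the OPRL appear evaluated at $x$ and at $-x$ respectively and the two pieces really combine into one integral over $E_\lambda$; and one has to check that the identity $X_n^\pm(z)=q_n^{(\lambda)}(\pm x(z))X_0^\pm(z)$ is not spoiled at the exceptional points $z=\pm1$ where $X_0^\pm(z)$ can vanish --- it is not, because there $\chi_{n*}=\chi_n$ forces $X^\pm(\pm1)=X_0^\pm(\pm1)\,\chi(\pm1)$, so both sides still agree --- while the only other degenerate situation, $|\lambda|=1$, produces a single point with $x(z)=0$ which carries no mass and is handled by continuity.
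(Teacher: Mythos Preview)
Your proposal is correct and follows essentially the same route as the paper: invoke Proposition~\ref{prop:X-ort}, use the relation $X_n^\pm(z)=q_n^{(\lambda)}(\pm x(z))X_0^\pm(z)$, transport the two integrals to the real line via the homeomorphisms built from \eqref{eq:t-x}, and compute $|X_0^\pm|^2$ using \eqref{eq:x-l} to obtain the factor $\frac{(x+\lambda)^2-1}{\lambda x}$. The only cosmetic difference is that the paper first sends \emph{both} integrals to $E_\lambda^+$ (via $z=e^{\pm i\theta_+(x)}$) and then reflects the second one to $E_\lambda^-$ by $x\mapsto -x$, whereas you transport the $\T_-$--integral directly to $E_\lambda^-$; your ``relevant branch'' device then lets you compute $|X_0^+|^2$ and $|X_0^-|^2$ in one stroke rather than separately. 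One small slip of wording: the image of $\T_-$ under the map $x(z)$ of \eqref{eq:x-t} is $E_\lambda^+$, not $E_\lambda^-$ (since $x(z)\ge0$); what you actually use --- and what is correct --- is that $z\mapsto -x(z)$ carries $\T_-$ onto $E_\lambda^-$.
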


\begin{proof}
With the notation of Proposition~\ref{prop:X-ort}, we have
$$
 X^\pm(e^{\pm i\theta(x)}) = q_n^{(\lambda)}(\pm x) \, X^\pm_0(e^{\pm i\theta(x)}).
$$
Therefore, using the homeomorphisms $z=e^{\pm i\theta_+(x)}$ between $E_\lambda^+$ and $\T_\pm$, the result of such a proposition may be rewritten as
\begin{equation} \label{eq:int-ql}
\begin{aligned}
 & \int_{E_\lambda^+} q_n^{(\lambda)}(x) \, q_n^{(\lambda)}(x)^\dag \,
 |X^+_0(e^{i\theta(x)})|^2 \, d\mu_+(e^{i\theta_+(x)})
 \\
 & \kern50pt + \int_{E_\lambda^+} \, q_n^{(\lambda)}(-x)q_n^{(\lambda)}(-x)^\dag \,
 |X^-_0(e^{-i\theta(x)})|^2 \, d\mu_-(e^{-i\theta_+(x)}) = 2I.
\end{aligned}
\end{equation}
Besides,
$$
 |X^\pm_0(e^{\pm i\theta(x)})|^2 =
 \left|1\pm\frac{e^{\pm i\theta(x)}+\lambda}{x}\right|^2 =
 \left(1\pm\frac{z+\lambda}{x}\right) \left(1\pm\frac{z^{-1}+\lambda}{x}\right),
 \quad
 z = e^{\pm i\theta(x)}.
$$
Bearing in mind that $z=z(x)$ satisfies \eqref{eq:x-l}, we find that
$$
 |X^\pm_0(e^{i\theta(x)})|^2
 = 2 \pm \frac{z+z^{-1}+2\lambda}{x}
 = 2 \pm \frac{x^2-1+\lambda^2}{\lambda x}
 = \pm \frac{(x\pm\lambda)^2-1}{\lambda x}.
$$
The theorem follows by inserting this equality into \eqref{eq:int-ql} and performing in the second integral of \eqref{eq:int-ql} the change of variables $x \mapsto -x$, taking into account that it maps $E_\lambda^+$ into $E_\lambda^-$ and $\theta_-(x)=\theta_+(-x)$.
\end{proof}

We should highlight that, in the expression for $\nu_\lambda$ given by the previous theorem, $d\mu_+(e^{i\theta_+(x)})$ is a measure on $E_\lambda^+$ while $d\mu_-(e^{-i\theta_-(x)})$ is a measure on $E_\lambda^-$, so that the sum of both is even, i.e. symmetric under the reflection $x \mapsto -x$. Nevertheless, the measure $\nu_\lambda$ is not even due to the additional factor
\begin{equation} \label{eq:factor}
\frac{(x+\lambda)^2-1}{\lambda x},
\end{equation}
which, by the way, is non-negative on $E_\lambda$.

Bearing in mind that $d\mu_+(z)=d\mu_-(z^{-1})$, we can also express the measure $\nu_\lambda$ using only the measure $\mu_+$ on the upper arc $\T_+$, i.e.
\begin{equation} \label{eq:nu+}
 d\nu_\lambda(x) =
 \frac{(x+\lambda)^2-1}{2\lambda x} \,
 (d\mu_+(e^{i\theta_+(x)}) + d\mu_+(e^{i\theta_-(x)})),
 \qquad
 x \in E_\lambda.
\end{equation}
Since $\theta_\pm(x)=\theta(x)$ for $x\in E_\lambda^\pm$, it is tempting to rewrite the above as
\begin{equation} \label{eq:nu+-red}
 d\nu_\lambda(x) =
 \frac{(x+\lambda)^2-1}{2\lambda x} \,
 d\mu_+(e^{i\theta(x)}),
 \qquad
 x \in E_\lambda.
\end{equation}
However, \eqref{eq:t-x} is not one-to-one between $E_\lambda$ and $\T_+$ because $\theta(-x)=\theta(x)$, thus \eqref{eq:nu+-red} should be considered simply as a symbolic representation of \eqref{eq:nu+}.

The results of Sect.~\ref{sec:DVZ} which express $\nu_{\pm1}$ directly in terms of $\mu$ may be recovered from Theorem~\ref{thm:nul}. When $\lambda=\pm1$ the factor \eqref{eq:factor} becomes $2\pm x$, the pair of intervals constituting $E_\lambda$ reduce to the single interval $[-2,2]$, and $\theta(x) = \arccos(\pm x^2/2 \mp 1)$ so that
$$
\begin{aligned}
 & 2\cos\frac{\theta(x)}{2} = \sqrt{2(1+\cos\theta(x))} = |x|,
 & \qquad & \lambda=1,
 \\[2pt]
 & 2\sin\frac{\theta(x)}{2} = \sqrt{2(1-\cos\theta(x))} = |x|,
 & & \lambda=-1.
\end{aligned}
$$
Then, $\theta(x)$ makes sense as a one-to-one map between $[-2,2]$ and $[0,2\pi]$  $(\lambda=1$) or $[-\pi,\pi]$ $(\lambda=-1$) just by redefining
$$
\begin{aligned}
 & \theta(x) \mapsto 2\pi-\theta(x),
 & \qquad & x\in[-2,0), & \qquad & \lambda=1,
 \\
 & \theta(x) \mapsto -\theta(x),
 & & x\in[-2,0), & & \lambda=-1,
\end{aligned}
$$
which yields the transformations \eqref{eq:th} and \eqref{eq:th-} for $\lambda=1$ and $\lambda=-1$ respectively.

A representation of $\nu_\lambda$ directly in terms of $\mu$ is also possible for $\lambda\ne\pm1$. One can think on rewriting the expression given in Theorem~\ref{thm:nul} as
\begin{equation} \label{eq:nu-mu}
 d\nu_\lambda(x) =
 \frac{(x+\lambda)^2-1}{2\lambda x} \,
 d\mu(z(x)),
 \qquad
 x \in E_\lambda,
\end{equation}
with
$$
 z(x) =
 \begin{cases}
 e^{i\theta(x)}, & x \in E_\lambda^+,
 \\
 e^{-i\theta(x)}, & x \in E_\lambda^-.
 \end{cases}
$$
Although $z(x)$ fails to be one-to-one between $E_\lambda$ and $\T$ because the four edges of $E_\lambda$ are mapped into $\pm1$, this difficulty is overcome due to the factor $(x+\lambda)^2-1$ which cancels any possible mass point at the two edges $\pm1-\lambda$, making unnecessary to cover them with the mapping $z(x)$. Therefore, \eqref{eq:nu-mu} makes perfect sense considering $z(x)$ as a one-to-one map between $E_\lambda\setminus\{1-\lambda,-1-\lambda\}$ and $\T$.

The connection between $\nu_\lambda$ and $\mu$ can be made more explicit for the absolutely continuous and pure point components. Suppose that a measure $\mu$ on  $\T$ has the form
$$
 d\mu(e^{i\theta}) =
 w(\theta) \, d\theta +
 \sum_k m_k \, (\delta(\theta-\theta_k)+\delta(\theta+\theta_k)) \, d\theta,
 \qquad \theta_k\in[0,\pi],
$$
where $w(-\theta)=w(\theta)$ and the mass points appear in symmetric pairs due to the invariance of $\mu$ under conjugation. The above expression assumes for $\eta=0,\pi$ the artificial splitting of any mass point $m\,\delta(\theta-\eta)\,d\theta$ as $(m/2)(\delta(\theta-\eta)+\delta(\theta+\eta))\,d\theta$, something which allows us to identify
$$
 d\mu_\pm(e^{i\theta}) =
 w(\theta) \, d\theta +
 \sum_k m_k \, \delta(\theta\mp\theta_k) \, d\theta.
$$
Then, according to Theorem~\ref{thm:nul},
$$
 d\nu_\lambda(x) = \frac{(x+\lambda)^2-1}{2\lambda x}
 \left( w(\theta(x)) \left|\frac{d\theta}{dx}\right| \, dx +
 \sum_k m_k \, (\delta(x-x_k)+\delta(x+x_k)) \, dx \right),
$$
where $x_k=x(\theta_k)=\sqrt{1+\lambda^2+2\lambda\cos\theta_k}$.

It only remains to obtain the Jacobian of the transformation $\theta(x)$ given in \eqref{eq:t-x}, which is
$$
 \left|\frac{d\theta}{dx}\right| =
 \left|\frac{x}{\lambda\sin\theta(x)}\right|.
$$
On the other hand,
$$
\begin{aligned}
 \sin^2\theta(x) = 1-\frac{t_\lambda^2(x)}{4}
 & = \frac{((1+\lambda)^2-x^2)(x^2-(1-\lambda)^2)}{4\lambda^2}
 \\
 & = \frac{((x+\lambda)^2-1)(1-(x-\lambda)^2)}{4\lambda^2}.
\end{aligned}
$$
Therefore,
$$
 \left|\frac{d\theta}{dx}\right| =
 \frac{2|x|}{\sqrt{((x+\lambda)^2-1)(1-(x-\lambda)^2)}}.
$$

Finally, we conclude that
\begin{equation} \label{eq:nul-acpp}
\begin{aligned}
 d\nu_\lambda(x)
 & = \frac{1}{|\lambda|}
 \sqrt{\frac{(x+\lambda)^2-1}{1-(x-\lambda)^2}} \, w(\theta(x)) \, dx
 \\[3pt]
 & + \sum_k \frac{m_k}{2\lambda x_k}
 \left[ ((x_k+\lambda)^2-1)\,\delta(x-x_k)
 + (1-(x_k-\lambda)^2)\,\delta(x+x_k) \right] dx.
\end{aligned}
\end{equation}
The explicitness of this expression for $\nu_\lambda$ makes it of particular interest for the analysis of specific examples.

Among the possible mass points of $\mu$, those located at $\pm1$ deserve special attention since they are the only ones which are artificially splitted to obtain $\nu_\lambda$. If $\mu$ has a mass $m$ at $\pm1$, then $\mu_\pm$ have a mass $m/2$ at the same point which, according to the previous results, leads to the following masses for $\nu_\lambda$ at the edges of $E_\lambda$:
\begin{equation} \label{eq:mass}
\begin{aligned}
 & \mu(\{1\}) = m \kern5pt \Rightarrow \kern3pt
 \begin{cases}
 	\nu_\lambda(\{1+\lambda\}) = m,
 	\\
 	\nu_\lambda(\{-1-\lambda\}) = 0, \quad \lambda\ne-1,
 \end{cases}
 \\
 & \mu(\{-1\}) = m \kern5pt \Rightarrow \kern3pt
 \begin{cases}
  	\nu_\lambda(\{1-\lambda\}) = 0, \quad \lambda\ne1,
	\\
	\nu_\lambda(\{\lambda-1\}) = m.
 \end{cases}
\end{aligned}
\end{equation}
In agreement with our previous observation, the edges $\pm1-\lambda\in E_\lambda$ are free of mass points for the measure $\nu_\lambda$ whenever $\lambda\ne\pm1$. In the case $\lambda=\pm1$, the only point of $E_{\pm1}=[-2,2]$ which cannot support a mass point of $\nu_{\pm1}$ is $\mp2$.

\section{Examples}
\label{sec:Ex}

In this section we will apply the general DVZ connection to some well known families of OPUC --whose details can be found for instance in \cite[Sect.~1.6]{SiOPUC}--, which will provide new explicit examples of OPRL. Theorem~\ref{thm:ql} yields a representation of these OPRL in terms of the Chebyshev polynomials $U_n$ on $[-2,2]$ (see \eqref{eq:cheb}) evaluated on the function $t_\lambda(x)$ defined in \eqref{eq:t-x}. For convenience, in what follows we will use the abbreviation $\hat{U}_n=U_n(t_\lambda(x))$, which is a $\lambda$-dependent polynomial of degree $2n$ in $x$.

\subsection{Bernstein-Szeg\H{o} polynomials}
\label{ssec:ex-BS}

As an introductory example, we will analyze the general DVZ connection when the OPUC are the degree one real Bernstein-Szeg\H{o} polynomials, i.e. those defined by the Verblunsky coefficients
$$
 \alpha_n = \alpha \, \delta_{n,0}, \qquad \alpha\in(-1,1).
$$
The associated OPUC,
$$
 \varphi_n(z) = \rho^{-1}(z-a)z^{n-1},
 \qquad\quad
 \rho=\sqrt{1-\alpha^2},
$$
are orthonormal with respect to the measure
$$
 d\mu(e^{i\theta}) = \frac{\rho^2}{|1-\alpha e^{i\theta}|^2} \,
 \frac{d\theta}{2\pi},
 \qquad\quad \theta\in(-\pi,\pi].
$$
This measure also makes orthonormal the corresponding OLPUC which, in view of \eqref{eq:OLPUC-OPUC}, have the form
$$
 \chi_{2k}(z) = \rho^{-1} (1-\alpha z) z^{-k},
 \qquad\quad
 \chi_{2k+1}(z) = \rho^{-1} (z-\alpha) z^k.
$$
In this case, the general DVZ connection leads to a Jacobi matrix with parameters
$$
 a_n =
 \begin{cases}
 	\rho, & n=0,
 	\\
 	1, & \text{even } n\ne0,
	\\
	\lambda, & \text{odd } n,
 \end{cases}
 \qquad
 b_n =
 \begin{cases}
 	\alpha+\lambda, & n=0,
	\\
	-\alpha, & n=1,
	\\
	0, & n\ge2.
 \end{cases}
$$
Using the results of Theorem~\ref{thm:ql} we can express the related OPRL as
$$
\begin{aligned}
 & q_{2k}^{(\lambda)}(x) = \rho^{-1}
 \left[
 \hat{U}_k - (x-\lambda+\alpha)\hat{U}_{k-1} + \alpha(x-\lambda)\hat{U}_{k-2}
 \right],
 \\
 & q_{2k+1}^{(\lambda)}(x) = \rho^{-1}
 \left[
 (x-\lambda)\hat{U}_k - (\alpha(x-\lambda)+1)\hat{U}_{k-1} + \alpha \hat{U}_{k-2}
 \right].
\end{aligned}
$$
On the other hand, if $\theta=\theta(x)$ is given by \eqref{eq:t-x}, then
$$
 |1-\alpha e^{i\theta}|^2 =
 \frac{|(1+\alpha\lambda)(\lambda+\alpha) - \alpha x^2|}{|\lambda|},
$$
thus, according to \eqref{eq:nul-acpp}, the orthogonality measure of $q_n^{(\lambda)}$ reads as
$$
 d\nu_\lambda(x) =
 \frac{1}{2\pi} \sqrt{\frac{(x+\lambda)^2-1}{1-(x-\lambda)^2}} \,
 \frac{\rho^2}{|(1+\alpha\lambda)(\lambda+\alpha) - \alpha x^2|} \, dx,
 \qquad x\in E_\lambda.
$$

\subsection{Lebesgue measure with a mass point}
\label{ssec:ex-mp}

Let us consider now the  probability measure obtained by inserting a mass point at $z=1$ into the Lebesgue measure on the unit circle, i.e.
$$
 d\mu(e^{i\theta}) = (1-m)\,\frac{d\theta}{2\pi} + m\,\delta(\theta)\,d\theta,
 \qquad
 \theta\in(-\pi,\pi],
 \qquad
 m\in(0,1).
$$
The related OPUC are known to be
$$
 \varphi_n(z) = {\textstyle
 \kappa_n \left(z^n - \alpha_{n-1}\sum_{j=0}^{n-1}z^j\right),}
 \qquad
 \alpha_n = \frac{1}{n+m^{-1}},
$$
the parameters $\alpha_n$ being the Verblunsky coefficients, so that
$$
 \rho_n = \sqrt{1-\alpha_n^2} =
 \frac{\alpha_n}{\sqrt{\alpha_{n-1}\alpha_{n+1}}},
 \qquad
 \kappa_n = \prod_{j=0}^{n-1} \rho_k^{-1} =
 \sqrt{\frac{\alpha_{-1}\alpha_n}{\alpha_0\alpha_{n-1}}} =
 \sqrt{\frac{\alpha_n}{(1-m)\alpha_{n-1}}}.
$$
Using \eqref{eq:OLPUC-OPUC} we find the following expressions for the related OLPUC,
$$
 \chi_n(z) =
 \begin{cases}
 	\kappa_n \left(z^{-k}-\alpha_{n-1}\sum_{j=-k+1}^kz^j\right), & n=2k,
	\\[7pt]
	\kappa_n \left(z^{k+1}-\alpha_{n-1}\sum_{j=-k}^kz^j\right), & n=2k+1.
 \end{cases}
$$
The DVZ transform is the Jacobi matrix $\mathcal{K}_\lambda$ in \eqref{eq:Kl} built out of the above coefficients $\alpha_n$, while Theorem~\ref{thm:ql} provides the corresponding OPRL $q_n^{(\lambda)}$,
$$
 q_n^{(\lambda)}(x) =
 \begin{cases}
 	\kappa_n
	\left[
	\hat{U}_k-\frac{\alpha_{n-1}}{\alpha_n}(x-\lambda+\alpha_n)\hat{U}_{k-1}
	\right],
	& n=2k,
	\\[7pt]
	\kappa_n
	\left[
	(x-\lambda-\alpha_{n-1})\hat{U}_k-\frac{\alpha_{n-1}}{\alpha_n}\hat{U}_{k-1}
	\right],
	& n=2k+1.
 \end{cases}
$$
To get these expressions we have taken into account that $1+\alpha_{n-1}=\alpha_{n-1}/\alpha_n$.
The measure $\nu_\lambda$ which makes $q_n^{(\lambda)}$ orthonormal follows from \eqref{eq:nul-acpp} and \eqref{eq:mass}, which lead to
$$
 d\nu_\lambda(x) =
 \frac{1-m}{2\pi|\lambda|}\sqrt{\frac{(x+\lambda)^2-1}{1-(x-\lambda)^2}}\,dx +
 m\,\delta(x-1-\lambda)\,dx,
 \qquad x\in E_\lambda.
$$

\subsection{Second kind polynomials of the previous example}
\label{ssec:ex-2mp}

The alluded second kind polynomials are those with Verblunsky coefficients
$$
 \alpha_n = -\frac{1}{n+m^{-1}}, \qquad m\in(0,1).
$$
Therefore, $\rho_n$ and $\kappa_n$ are the same as in the previous example, while the Carath\'eodory function $F$ of the orthogonality measure is the inverse of that one for the preceding example, i.e.
$$
 F(z) = \frac{1-z}{1-(1-2m)z}.
$$
Since $F$ has no singularities on $\T$, the related measure is absolutely continuous and is given by
$$
 d\mu(e^{i\theta}) = \re\,F(e^{i\theta})\,\frac{d\theta}{2\pi} =
 \frac{(1-m)(1-\cos\theta)}{(1-2m)(1-\cos\theta)+2m^2}\,\frac{d\theta}{2\pi},
 \qquad \theta\in(-\pi,\pi].
$$
As for the corresponding OPUC, we find that
$$
 \varphi_n(z) = \textstyle
 \kappa_n \left(z^n - \alpha_{n-1} \sum_{j=0}^{n-1}(1+2jm)z^j\right),
$$
because $\phi_n=\kappa_n^{-1}\varphi_n$ solves the recurrence relation \eqref{eq:mRR} for the monic OPUC, as can be proved by induction. Using \eqref{eq:OLPUC-OPUC} we obtain the corresponding OLPUC,
$$
 \chi_n(z) =
 \begin{cases}
 	\kappa_n \left(z^{-k}-\alpha_{n-1}\sum_{j=-k+1}^k(1+2(k-j)m)z^j\right),
	& n=2k,
	\\[7pt]
	\kappa_n \left(z^{k+1}-\alpha_{n-1}\sum_{j=-k}^k(1+2(k+j)m)z^j\right),
	& n=2k+1.
 \end{cases}
$$

The above results seem to be new for $m\ne1/2$, adding an item to the list of OPUC examples given in \cite[Sect.~1.6]{SiOPUC}, where this case is discussed only for $m=1/2$. Furthermore, via DVZ connection, they provide a new OPRL family with Jacobi matrix $\mathcal{K}_\lambda$ as in \eqref{eq:Kl}, which differs from that one arising in Example~\ref{ssec:ex-mp} only in the main diagonal (up to the first one, the diagonal coefficients of $\mathcal{K}_\lambda$ are the opposite of those in the analogous matrix for Example~\ref{ssec:ex-mp}). According to Theorem~\ref{thm:ql} and using that $1+\alpha_{n-1}=\alpha_{n-1}/\alpha_{n-2}$, we find that the corresponding OPRL are
$$
 q_n^{(\lambda)}(x) =
 \begin{cases}
 	\begin{aligned}
 	& \kappa_n
	\big[ \textstyle
	\hat{U}_k
	-\frac{\alpha_{n-1}}{\alpha_{n-2}}
	(x-\lambda-m\frac{\alpha_{n-2}}{\alpha_{2n-2}})\hat{U}_{k-1}
	\\
	& \kern100pt \textstyle
	+4m\alpha_{n-1}(x-\lambda-1)\sum_{j=0}^{k-2}(j+1)\hat{U}_j
	\big],
	\end{aligned}
	& n=2k,
	\\[9pt]
	\begin{aligned}
 	& \kappa_n
	\big[ \textstyle
	(x-\lambda-\alpha_{n-1})\hat{U}_k
	\\
	& \kern40pt \textstyle
	-\frac{\alpha_{n-1}}{\alpha_{n-2}}
	(1+2m(1+(n-1)(x-\lambda)))\alpha_{n-2})\hat{U}_{k-1}
	\\
	& \kern100pt \textstyle
	-4m\alpha_{n-1}(x-\lambda-1)\sum_{j=0}^{k-2}(j+1)\hat{U}_j
	\big],
	\end{aligned}
	& n=2k+1.
 \end{cases}
$$
The relation \eqref{eq:nul-acpp} implies that they are orthonormal with respect to the measure
$$
 d\nu_\lambda(x) =
 \frac{1}{2\pi|\lambda|}\sqrt{\frac{(x+\lambda)^2-1}{1-(x-\lambda)^2}}
 \frac{(1-m)|x^2-(1+\lambda)^2|}{(1-2m)|x^2-(1+\lambda)^2|+4m^2|\lambda|}\,dx,
 \qquad x\in E_\lambda,
$$
as follows by noticing that, for $\theta=\theta(x)$ as in \eqref{eq:t-x},
$$
 1-\cos\theta = \frac{|x^2-(1+\lambda)^2|}{2|\lambda|}.
$$

\eject

\noindent {\bf \large Acknowledgements}

\bigskip

The work of the first, third and fourth authors has been supported in part by the research project MTM2017-89941-P from Ministerio de Econom\'{\i}a, Industria y Competitividad of Spain and the European Regional Development Fund (ERDF), by project UAL18-FQM-B025-A (UAL/CECEU/FEDER) and by project E26\_17R of Diputaci\'on General de Arag\'on (Spain) and the ERDF 2014-2020 ``Construyendo Europa desde Arag\'on".

The work of the second author has been partially supported by the research project PGC2018--096504-B-C33 supported by Agencia Estatal de Investigaci\'on of Spain.

\bigskip


\end{document}